\documentclass[12pt,letterpaper]{amsart}

\makeatletter
\providecommand{\@LN}[2]{}
\makeatother  

\usepackage{epsfig,amsmath,amsthm,amssymb,graphicx}
\usepackage[top=1in, bottom=1in, left=1in, right=1in]{geometry}
\usepackage{color}

\usepackage{thmtools}      
\usepackage{thm-restate}   

\usepackage[pagebackref,hypertexnames=false,colorlinks=true]{hyperref} 
\usepackage{cleveref}

\usepackage{xfrac}
\usepackage[table]{xcolor}

\usepackage{comment}
\usepackage{array, multirow}
\usepackage{makecell}
\usepackage{import}

\usepackage{enumerate}

\usepackage{caption}
\usepackage{subcaption}

\usepackage{faktor}

\usepackage{tikz}
\usepackage{tikz-cd}
\usetikzlibrary{matrix, arrows, cd}
\usepackage{pb-diagram}
\usepackage{cancel}

\definecolor{myOlive}{rgb}{.29,.28,.16}
\definecolor{myRed}{rgb}{.78,0,0}

\newtheorem{proposition}{Proposition}[section]
\newtheorem{theorem}[proposition]{Theorem}
\newtheorem{corollary}[proposition]{Corollary}

\newtheorem*{theorem*}{Theorem}
\newtheorem*{proposition*}{Proposition}
\newtheorem*{lemma*}{Lemma}
\newtheorem*{corollary*}{Corollary}

\theoremstyle{definition}

\newtheorem{question}[proposition]{Question}

\usepackage{thmtools}      
\usepackage{thm-restate}   

\theoremstyle{remark}

\newcommand{\bdry}{\partial}

\newcommand{\N}{\mathbb{N}}
\newcommand{\Z}{\mathbb{Z}}
\newcommand{\F}{\mathcal{F}}

\newcommand{\lk}{\operatorname{lk}}

\renewcommand{\int}[1]{\operatorname{int}(#1)}

\newcommand{\Sum}{\displaystyle \sum  }

\newcommand{\ceil}[1]{\left\lceil #1 \right\rceil}

\begin{document}

\title{Gordian distance and clasper surgery for links}
\thanks{This material is based upon work supported by the National Science Foundation under Grant No. DMS-1928930 while the authors participated in a program hosted by the Simons Laufer Mathematical Sciences Institute (formerly Mathematical Sciences Research Institute) in Berkeley, California, during the summer of 2025.}

\author[A.~Bosman]{Anthony Bosman}
\address{Department of Mathematics and Physics, Andrews University}
\email{bosman@andrews.edu}
\urladdr{andrews.edu/cas/math/faculty/bosman-anthony.html}

\author[C.~W.~Davis]{Christopher W.\ Davis}
\address{Department of Mathematics, University of Wisconsin--Eau Claire}
\email{daviscw@uwec.edu}
\urladdr{people.uwec.edu/daviscw}

\author[T.~Martin]{Taylor Martin}
\address{Department of Mathematics and Statistics, Sam Houston State University}
\email{taylor.martin@shsu.edu}
\urladdr{shsu.edu/academics/mathematics-and-statistics/faculty/martin.html}

\author[K.~Vance]{Katherine Vance}
\address{Department of Mathematics and Computer Science, Simpson College}
\email{katherine.vance@simpson.edu}
\urladdr{simpson.edu/faculty-staff/katherine-vance/}

\date{}

\begin{abstract}
In 2000, Habiro introduced the notion of \(C_k\)-equivalence of knots and links.  This geometric filtration is closely connected to finite type invariants, a class of invariants including Milnor's invariants.  Shortly thereafter, Ohyama, Taniyama, and Yamada proved that \(C_k\)-equivalence, and by extension finite type invariants, say very little about the unknotting number by showing that any knot is at most one crossing change away from being \(C_k\)-trivial for any \(k\in \mathbb{N}\).  The same is not true for links, since the pairwise linking number gives a lower bound on unlinking and is an invariant of \(C_2\)-equivalence.  We prove that, aside from the linking number, the result of Ohyama, Taniyama, and Yamada extends to links:  any \(n\)-component link with linking number zero can be reduced to a \(C_k\)-trivial link in at most \(n^2\) crossing changes.  As a consequence, Milnor's invariants carry only limited information about the unlinking number. To establish a lower bound, we  produce a sequence of \(n\)-component links for which the crossing change distance to a \(C_k\)-trivial link grows quadratically in \(n\). Notably, these bounds are independent of the choice of \(k\in \mathbb{N}\). Finally, we determine the exact number of crossing changes to a \(C_k\)-trivial link for links with nonzero linking numbers and where no component is \(C_k\)-trivial.
 
\end{abstract}

\maketitle
\section{Introduction}


The unlinking number $u(L)$ of a link $L$ in $S^3$ is the minimal number of crossing changes needed to transform $L$ into the unlink.  It has been the subject of intense study \cite{KM86, Kohn91, Kohn93,Likorish82, Yasutaka81}.  
  In 
\cite{KT} it is proven that the problem of computing the unlinking number is NP-hard.  

The most obvious lower bound on $u(L)$ comes from pairwise linking numbers.  A crossing change between distinct components of a link changes the pairwise linking number between those components by exactly $\pm 1$.  Thus, if $\Lambda(L)=\sum_{i<j}| \lk(L_i, L_j)|$, then $\Lambda(L)\le u(L)$.  

Two links are called link homotopic if one can be transformed to the other by only self-crossing changes \cite{M1}. (A self-crossing change is the act of changing a crossing between two arcs of a single component of $L$.)  In \cite{BDMOV2025}, the authors and Otto show that modulo link homotopy, $\Lambda(L)$ tells you nearly everything about the unlinking number. Indeed, if we set $\Lambda^*(L) = \sum_{i<j} \alpha_{ij}$ where $\alpha_{ij} = |\lk(L_i,L_j)|$ unless $i+1<j$ and $\lk(L_i,L_j)=0$, in which case $\alpha_{ij}=2$, then \cite{BDMOV2025} proves the following bounds.

\begin{theorem*}[Theorem 4.4 of \cite{BDMOV2025}]
If $L$ is an $n$-component link and $n_h(L)$ is the minimal number of crossing changes needed to replace $L$ by a homotopy trivial link 
then $\Lambda(L)\le n_h(L)\le \Lambda^*(L)$.
\end{theorem*}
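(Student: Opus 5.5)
Both inequalities come down to the classification of links up to link homotopy through Milnor's $\bar\mu$-invariants with non-repeating indices. These invariants are link homotopy invariants, and by Habiro--Meilhan and Milnor they determine the link homotopy class.

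\textbf{Lower bound.} Self-crossing changes do not alter any pairwise linking number. A crossing change between $L_i$ and $L_j$ alters only $\lk(L_i,L_j)$, and only by $\pm1$. Every homotopy trivial link has all pairwise linking numbers zero, since linking numbers are link homotopy invariants. So any sequence of crossing changes ending at a homotopy trivial link must contain at least $|\lk(L_i,L_j)|$ crossing changes between $L_i$ and $L_j$ for each pair $i<j$. Summing over pairs gives $\Lambda(L)\le n_h(L)$.

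\textbf{Upper bound.} Up to link homotopy, I would put $L$ in a normal form: the closure of a pure string link that is a product of clasper-like commutator pieces. Concretely, use Habiro--Meilhan's standard form for string links modulo self $C_1$-moves. It writes the link as a product, over $k=2,\dots,n$, of the $k$-th components' contributions. Each contribution is a word in the free Milnor group recording how the strand $L_k$ links $L_1,\dots,L_{k-1}$, and it is realized by simple tree claspers whose leaves lie on strands with indices $<k$.

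The plan is to unlink the strands one at a time, starting from the last. Strand $L_n$ represents an element $w_n$ of the reduced free group $RF(x_1,\dots,x_{n-1})$, and $L$ is homotopy trivial exactly when every such element is trivial. A crossing change between $L_n$ and $L_j$ multiplies $w_n$ by a conjugate of $x_j^{\pm1}$. Since $RF$ is nilpotent with $x_j$ central modulo the relevant terms, I would show the following. When $\lk(L_j,L_n)=m\ne0$, exactly $|m|$ crossing changes with $L_j$ suffice to remove the $x_j$-exponent while also adjusting higher commutators involving $x_j$: choose the conjugating elements so that $x_j^{g_1}\cdots x_j^{g_m}$ realizes the required correction. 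When $\lk=0$ but higher commutators involving $x_j$ remain, two crossing changes $x_j^{g}x_j^{-h}=[\ldots]$ produce an arbitrary commutator correction. The adjacent case $j=n-1$ needs no cost, because the only remaining invariants involving $x_{n-1}$ as the outermost letter can be absorbed by the other corrections. Then induct on $n$ after deleting $L_n$.

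\textbf{Main obstacle.} The step I expect to be hardest is this algebraic lemma in the reduced free group. It says that the coset of $w_n$ can be killed by choosing, for each $j$, $|\lk|$ conjugates of $x_j^{\pm1}$ (or two when $\lk=0$ and $j<n-1$, none when $j=n-1$ and $\lk=0$). It also needs every conjugate to be realizable geometrically by a crossing change between $L_n$ and $L_j$. Realizability follows by dragging a finger of $L_n$ along a path representing $g$ to $L_j$. For the algebra, I would filter by the lower central series and argue degree by degree, using that $RF$ relations kill repeated letters so that commutators of $x_j$ with words in the other generators span the needed layer.
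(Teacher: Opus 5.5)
Your lower bound is the standard linking-number count and is fine. For the upper bound, note that the paper does not prove this statement; it quotes it from \cite{BDMOV2025}. The closest argument in the paper is the clasper proof of Theorem~\ref{thm:main lk=1} in Section~\ref{sect: proof of main theorem}. There, crossing changes are degree-one claspers $c_{ij}$. Each lowest-degree tree is slid, at the cost of higher-degree trees, until a node adjacent to two of its leaves runs parallel to some $c_{ij}$, and the tree is then absorbed into $c_{ij}$ by Habiro's moves 6 and 12.

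Your route is the group-theoretic shadow of that argument. You work in the Habegger--Lin coordinates $H(n)\cong RF(n-1)\rtimes H(n-1)$; this, not the Habiro--Meilhan normal form, is what you actually use. In these coordinates a crossing change between $L_n$ and $L_j$ multiplies $w_n$ by an arbitrary conjugate of $x_j^{\pm1}$. Replacing one conjugator $g$ by $gu$ with $u\in\Gamma_{k-1}$ changes the product by $[x_j,u]^{\pm1}$ modulo $\Gamma_{k+1}$; this is the analogue of absorbing a degree-$k$ tree into $c_{jn}$. Moving $x_{n-1}$ innermost by Jacobi corresponds to an IHX choice of a node whose two leaves lie on a pair that was allotted a crossing change. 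In Section~\ref{sect: proof of main theorem} every pair, including $(i,i)$, has such a clasper, so no choice is needed there.

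Your version makes the count transparent. Everything reduces to one fact: for $k\ge 2$, the layer $\Gamma_k/\Gamma_{k+1}$ of $RF(n-1)$ is spanned by classes $[x_j,u]$ with $j\ne n-1$. This holds because nonzero brackets have distinct letters, so $x_{n-1}$ occurs at most once and can be put innermost. That fact also explains why $\Lambda^*$ may discount exactly one partner of each strand, and nilpotency of class $n-1$ ends the induction. The clasper version instead works on the link itself and never needs brackets with repeated letters to vanish. That is exactly what lets it extend to $C_k$-equivalence, where no reduced free group is available.

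Two points in your sketch need repair.

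\emph{The per-generator claims are false.} You say the changes with $L_j$ alone can correct the commutators involving $x_j$, and that $x_j^gx_j^{-h}$ gives an arbitrary correction. Work in $RF(x_1,x_2,x_3)$ with $g\equiv x_2^ax_3^b[x_2,x_3]^c$ and $h\equiv x_2^{a'}x_3^{b'}[x_2,x_3]^{c'}$ modulo the normal closure of $x_1$. Then
\[
x_1^{g}x_1^{-h}=[x_1,x_2]^{a-a'}\,[x_1,x_3]^{b-b'}\,[[x_1,x_2],x_3]^{ab-a'b'}\,[x_1,[x_2,x_3]]^{c-c'},
\]
so $[[x_1,x_2],x_3]$ is never of this form. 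More generally, for $k\ge3$ the image of $[x_j,\Gamma_{k-1}]$ in $\Gamma_k/\Gamma_{k+1}$ has rank at most $(k-2)!\binom{n-2}{k-1}$. The part of that layer involving $x_j$ has rank $(k-1)!\binom{n-2}{k-1}$. So the lemma holds only jointly over $j$, layer by layer, as in your last paragraph. That joint argument works because every $j\le n-2$ has $\alpha_{jn}\ge1$, hence at least one factor whose conjugator you can adjust.

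\emph{``Make $w_n=1$, delete $L_n$, and induct'' is not automatic.} Crossing changes among $L_1,\dots,L_{n-1}$ change the complement and can make $w_n$ nontrivial again. There are two fixes.
\begin{enumerate}
\item Note that $w_n=1$ means the string link is homotopic to $\sigma'\otimes 1$. Then $L_n$ splits off, and crossing changes on the split link pull back to $L$.
\item Process strands in increasing order: trivialize strands $1,\dots,k-1$ first, then strand $k$. Changes between strand $k$ and earlier strands leave the sub-string link on strands $1,\dots,k-1$ untouched. They also leave every later linking number, and hence every later budget, unchanged.
\end{enumerate}
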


In the 1950's \cite{M1, M2}, Milnor defined a collection of higher order linking invariants, now called Milnor invariants. These associate to an $n$-component link $L$ and a $k$-tuple $I\in \{1,\dots,n\}^k$ an integer $\mu_I(L)$. The value $k=|I|$ is called the length of the Milnor invariant.  While only the shortest length nonvanishing Milnor invariants are well defined,  Milnor showed that these invariants determine when a link is link homotopic to the unlink.

\begin{theorem*}[\cite{M1}]
An $n$-component link $L$ is link homotopic to the unlink if and only if $\mu_I(L)=0$ for all $I$ without repeated indices.  
\end{theorem*}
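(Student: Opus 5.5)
The plan follows Milnor's original strategy: reduce to a statement about the link group modulo link homotopy. Let $G=\pi_1(S^3\setminus L)$ and let $RG$ be the reduced group, the quotient of $G$ by the relations that each meridian commutes with all of its conjugates. The first step is to check that $RG$, together with the conjugacy classes of the meridians and longitudes, is invariant under self-crossing changes. A self-crossing change can be realized as a homotopy of one component, passing through a singular link with one double point. Across such a move the group changes only by a relation forcing a meridian $m_i$ to commute with a conjugate $m_i^{g}$, and this relation already holds in $RG$. The Milnor invariants without repeated indices are read off from the Magnus expansion of the longitudes in $RG$. They are therefore link homotopy invariants, which gives the easy direction: for the unlink all such $\mu_I$ vanish.

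For the converse I will induct on $n$. The case $n=1$ is trivial, since every knot is homotopic to the unknot. Suppose every $\mu_I$ with distinct indices vanishes. By induction the sublink $L'=L_1\cup\dots\cup L_{n-1}$ is link homotopic to the unlink. Extend this homotopy to an ambient homotopy of $L$ that keeps $L_n$ in the complement of the moving sublink, which is possible after small perturbation. Then $L$ becomes $U_{n-1}\cup K$, where $K$ lies in $S^3\setminus U_{n-1}$, whose fundamental group is the free group $F$ on $x_1,\dots,x_{n-1}$. Self-crossing changes of $K$ alter its free homotopy class in $S^3\setminus U_{n-1}$ only through the reduced relations. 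So $K$ determines an element of the reduced free group $RF$, up to conjugacy, and $K$ is homotopic to a trivial component exactly when this element is trivial.

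The key and hardest step is that the longitude class $\lambda_n\in RF$ is detected by its Magnus expansion restricted to monomials without repeated variables. Its coefficients are precisely $\mu_{i_1\dots i_k n}$ with distinct indices. These coefficients are well defined here, without indeterminacy, because all shorter invariants vanish. I would prove this injectivity by filtering $RF$ by its lower central series, which terminates at step $n$ since $RF$ is nilpotent. The successive quotients are free abelian, generated by basic commutators in distinct letters. By induction on the degree one checks that the Magnus expansion sends these generators to linearly independent leading terms.

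With this in hand, vanishing of the $\mu$'s forces $\lambda_n=1$ in $RF$, so $K$ is homotopic in the complement to a trivial loop. Realizing that homotopy by self-crossing changes of $K$ alone gives a link homotopy of $L$ to the unlink. The conjugacy ambiguity and the choice of basing are harmless, because every conjugate of the identity is the identity.
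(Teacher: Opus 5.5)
The paper does not prove this statement; it quotes Milnor \cite{M1}. Your outline (reduced group, induction on $n$, faithfulness of the Magnus expansion on the reduced free group) follows Milnor's strategy, but the last step has a genuine gap.

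From the vanishing of the $\mu$-invariants you get $\lambda_n=1$ in $RF$. That says only that the class $[K]\in F=\pi_1(S^3\setminus U_{n-1})$ lies in $N=\ker(F\to RF)$, the normal closure of the elements $[x_i,gx_ig^{-1}]$. It does not say $[K]=1$ in $F$, which is what null-homotopy of $K$ in the complement means. A homotopy of $K$ alone (self-crossing changes of $K$ with $U_{n-1}$ fixed) preserves $[K]\in F$ exactly, so your final step can never kill a nontrivial element of $N$.

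For example, take $n=3$ and a curve $K$ representing $[x_1,x_2x_1x_2^{-1}]\neq 1$ in $F(x_1,x_2)$. Every monomial in its Magnus expansion contains $X_1$ at least twice, so all distinct-index $\mu$-invariants of $U_2\cup K$ vanish. Yet $K$ cannot be separated from $U_2$ by moving $K$ alone.

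Your sentence that self-crossing changes of $K$ alter its class ``only through the reduced relations'' has the roles reversed. It is self-crossing changes of the $U_i$ that move $[K]$ within its coset of $N$, and you never show that every element of $N$ is achieved this way.

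That realization lemma is the missing idea. For $i<n$ and $g,h\in F$, the link $U_{n-1}\cup K$ is link homotopic to $U_{n-1}\cup K'$ with $[K']=[K]\,h[x_i,gx_ig^{-1}]^{\pm1}h^{-1}$. One way to prove it:
\begin{enumerate}
\item Use finger moves (isotopies) to push an arc of $U_i$ along $g$ next to another arc of $U_i$, and an arc of $K$ along $h$ into the same ball.
\item Perform a $\Delta$-move on these three strands. Holding the two $U_i$-strands fixed, this band-sums $K$ with a loop representing the commutator of their meridians $x_i$ and $gx_ig^{-1}$.
\item On the other hand, a $\Delta$-move is realized by two crossing changes between the two $U_i$-strands. (For instance, the Borromean rings are unlinked by two crossing changes between the same pair of components; in clasper language, the $Y$-tree is the difference of two degree-one claspers whose edges differ by passing through $K$.) So this is a link homotopy.
\item Retracting the fingers returns $U_{n-1}$ to the trivial link.
\end{enumerate}
Applying this to each factor of $[K]\in N$ reduces you to $[K]=1$ in $F$, and only then does your final step of homotoping $K$ alone apply.

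A smaller point: extending the homotopy of $L'$ over $L$ is not a matter of small perturbation. The disk swept out in each self-crossing change of $L'$ generically meets $L_n$, so you must first isotope $L_n$ off that disk in the complement of $L'$.
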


As a consequence, \cite[Theorem 4.4]{BDMOV2025} says that those higher order Milnor invariants with no repeated indices say very little about the unlinking number.  In this paper, we ask about Milnor invariants with repeated indices and bounded length.  

\begin{question}\label{quest: main}
Let $L$ be a link with vanishing pairwise linking numbers and let $k\in \N$. How many crossing changes does it take to replace $L$ by a link whose Milnor invariants of length up to $k$ vanish?  
\end{question} 

In \cite{Hab1}, Habiro introduced the language of \emph{claspers} to discuss surgery instructions which result in tying a link into some iterated Whitehead doubles of the Hopf link.  See Figure~\ref{fig: clasper surgery} for an example.  If two links are related by a sequence of surgeries along claspers of degree $k$, then they are called $C_k$-equivalent.  A link that is $C_k$-equivalent to the unlink will be called $C_k$-trivial.  It is an open question whether two links that are $C_k$-equivalent for all $k$ must be isotopic.

\begin{figure}
     \centering
     \begin{subfigure}[t]{0.3\textwidth}
         \centering
         \begin{tikzpicture}
         \node at (0,0){\includegraphics[width=.9\textwidth]{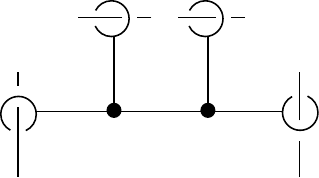}};
         \end{tikzpicture}
         \caption{A degree 3 clasper for a link}
     \end{subfigure}
     \hspace{0.1\textwidth}
     \begin{subfigure}[t]{0.3\textwidth}
     \centering
         \begin{tikzpicture}
         \node at (0,0){\includegraphics[width=.9\textwidth]{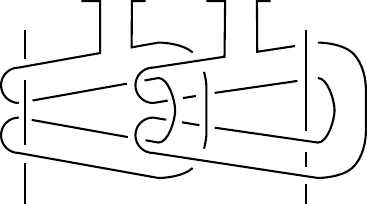}};
         \end{tikzpicture}
         \caption{the result of surgery along that clasper}
     \end{subfigure}
     \caption{}\label{fig: clasper surgery}
     \end{figure}

Milnor invariants of length $k$ are invariants of $C_{k}$-equivalence (Theorem 7.2 in \cite{Hab1}).  Thus, if one wants to understand how many crossing changes it takes to transform a link to a new link with all Milnor invariants vanishing, it makes sense to ask how many crossing changes it takes to transform $L$ into a $C_k$-trivial link.  We call this the $C_k$-trivializing number and denote it $u_k(L)$.  

For any knot $J$, there is another knot $J'$ which is $C_k$-equivalent to $J$ and has unknotting number $1$ \cite{Ohyama2000, OhTaYa2002}.  In other words, $u_k(J)\le 1$ for every knot $J$. 
We extend this result to links.  When all of the pairwise linking numbers vanish we get the following bound on $u_k(L)$ which grows quadratically with the number of components.

\begin{theorem}\label{thm:main lk=0}
Let $L=L_1\cup\dots\cup L_n$ be a link with pairwise linking numbers 0. For any $k\in \N$, $u_k(L)\le
n^2$.  Moreover, there is a sequence of crossing changes on $L$ consisting of 
\begin{enumerate}
\item at most one crossing change between two arcs on $L_i$ for each $i$ and 
\item at most two crossing changes between $L_i$ and $L_j$ for each $i\neq j$
\end{enumerate}
after which the resulting link is $C_k$-trivial.
\end{theorem}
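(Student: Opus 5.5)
We prove the theorem by induction on the clasper degree, using a structure theory for $C_k$-equivalence of links. Since all pairwise linking numbers vanish, $L$ is $C_2$-trivial. Suppose, more generally, that $L$ is $C_m$-trivial for some $m\ge 2$. Habiro's calculus then lets us write $L$ as the result of surgery on the unlink $U$ along a disjoint union of tree claspers of degree $\ge m$. Modulo $C_{m+1}$-equivalence these claspers may be slid, crossed past each other, and have their leaves passed through edges and strands, since all such moves create only claspers of higher degree. Each clasper of degree $m$ has leaves labelled by components, giving a word or tree in $\{1,\dots,n\}$.

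We sort the degree-$m$ claspers by the set of components their leaves touch. There are three kinds: those touching a single component $i$, those touching exactly two components $i\neq j$, and those touching three or more components. The key steps, in order, are as follows.

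\begin{enumerate}
\item Show that claspers involving at least three distinct components can be converted into claspers touching at most two components. The tool is the Brunnian/homotopy theory of \cite{BDMOV2025}, adapted to the $C_k$ setting. The idea is that a single crossing change between $L_i$ and $L_j$ creates a degree-1 clasper. By judiciously placing a canceling pair of crossing changes between $L_i$ and $L_j$ ("at most two" per pair), this clasper can absorb arbitrary degree-$m$ trees, for every $m$, whose leaves run between $i$ and $j$. Concretely, we would embed a pair of crossing changes as a degree-1 clasper $c_{ij}$ together with its inverse. We then use the zip/slide relations to express every degree-$\ge 2$ clasper involving components $i$ and $j$ as obtained from $c_{ij}$ by letting the leaf of $c_{ij}$ wander around $S^3$ along a band.
\item Handle the self-claspers on $L_i$ using the knot case \cite{Ohyama2000, OhTaYa2002}. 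One self-crossing change on $L_i$ lets us realize any $C_k$-class of knot in a neighborhood of $L_i$. This needs to be done compatibly with the other components, so we perform it in a ball meeting only $L_i$, after isotoping the relevant claspers there.
\item Argue that all these crossing changes can be chosen at the start, independent of $m$. That is, we find crossing changes on $L$ such that the resulting link is $C_m$-trivial for all $m\le k$ simultaneously. We do this by inductively modifying the bands guiding where the crossing changes occur, rather than adding new crossing changes at each degree. Since moving a band changes the result only by claspers of degree one higher, this step is an induction on $m$ up to $k$.
\end{enumerate}

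The total count is then $n + 2\binom n2 = n^2$.

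The main obstacle is step 3: showing that a fixed budget of crossing changes absorbs obstructions in every degree. We expect to treat a crossing change as a degree-1 clasper whose edge may be an arbitrarily knotted band. Performing a Whitehead-type band modification of that edge (cf. the Ohyama–Taniyama–Yamada argument) then changes the result by any prescribed degree-$m$ tree clasper with a leaf on the two involved components, modulo $C_{m+1}$. Verifying that every degree-$m$ tree can be generated this way, with one leaf hitting $L_i$ and the rest arbitrary, and that trees on three or more components reduce to ones with two distinguished leaves via the IHX/STU relations, is where most of the work lies.
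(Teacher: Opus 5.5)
Your architecture matches the paper's. Every crossing change is a degree-1 simple clasper, all of them are fixed at the outset, and at each degree you re-embed their edges rather than adding new crossing changes. Your canceling pair for $i<j$ is equivalent in effect to the paper's first crossing change making $\lk(L_i,L_j)=\pm1$ followed by the Murakami--Nakanishi one.

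The gap is that the step you defer as ``where most of the work lies'' is the entire proof, and the version you propose to verify is false. Absorption into $c_{ij}$ is not governed by which components a tree touches. Surgery on a degree-1 clasper with leaves on $L_1$ and $L_2$, however its edge is embedded, does not alter the sublink $L_1\cup L_3\cup L_4$: delete $L_2$ and the clasper has a leaf meeting nothing. So no re-embedding of $c_{12}$ can cancel a degree-2 tree with leaves on $L_1,L_3,L_4$, because that tree changes $\mu_{134}$, a $C_3$-invariant, by $\pm1$. Hence ``one leaf hitting $L_i$ and the rest arbitrary'' cannot be generated.

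The criterion that works is local on the tree. If a degree-$k$ tree $T$ has a node $N$ adjacent to two leaves $\ell_1,\ell_2$ meeting $L_i$ and $L_j$ (possibly $i=j$), then, modulo trees of degree $>k$, $T\cup c_{ij}$ can be replaced by a single re-embedded degree-1 clasper $c_{ij}'$. The paper proves this in four steps:
\begin{enumerate}
\item Slide leaves (Theorem~\ref{thm: Habiro moves}(1)) until $\ell_1,\ell_2$ sit beside the leaves of $c_{ij}$ on $L$.
\item Pass edges through edges and through $L$ (parts (2) and (3)) until the path from $\ell_1$ through $N$ to $\ell_2$ runs parallel to the edge $e_{ij}$.
\item Fix the framing with part (4).
\item Apply Habiro's moves 6 and 12, which turn $T$ into a tree $T'$ with a leaf meeting only $e_{ij}$; surgery on $T'$ merely re-embeds $e_{ij}$.
\end{enumerate}
Every tree of degree at least $2$ has such a node: any node adjacent to at most one other node carries two leaves. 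So every degree-$k$ tree is absorbed by some $c_{ij}$ with $i\le j$, whatever components it touches. No sorting by component sets and no IHX/STU reduction is needed.

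For the same reason, Step 2 cannot simply be quoted from Ohyama--Taniyama--Yamada. Their theorem gives a crossing change for one fixed knot and one fixed $k$. Here, by contrast, trees with all leaves on $L_i$ can be pushed into a ball only modulo $C_{m+1}$. The slides at every later stage also create new such trees, together with trees whose node-adjacent leaves lie on $L_i$ but which meet other components. A crossing change obtained once from their theorem does not dispose of these; it must be re-embedded at every stage, which is again the absorption lemma, now with $i=j$.

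The paper therefore includes for each $i$ a degree-1 clasper $c_{ii}$ with both leaves on $L_i$ that is initially trivial: its surgery is two Reidemeister I moves. This is why one self-crossing change per component suffices, while a pair $i\neq j$ costs two, since a degree-1 clasper between distinct components necessarily changes their linking number. With $c_{ij}$ available for all $i\le j$ and the absorption lemma in hand, your Step 3 goes through as you describe and yields $n+2\binom{n}{2}=n^2$.
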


 As linking numbers are invariants of $C_2$-equivalence, one does not expect a link with large pairwise linking numbers to be reducible to a $C_k$-trivial link in a small number of crossing changes; however, when all linking numbers are $\pm1$, we get better bounds:

\begin{restatable}{theorem}{MainThmLkOne}
\label{thm:main lk=1}
Let $L=L_1\cup\dots\cup L_n$ be a link with $|\lk(L_i,L_j)|=1$ for all $i,j$.  Then ${n\choose 2}\le u_k(L)\le 
{n+1\choose2}$.  Moreover, for all $i,j$ and for any $k\in \N$, there is a sequence of crossing changes transforming $L$ to a $C_k$-trivial link and consisting of 
\begin{enumerate}
\item at most one crossing change between two arcs on $L_i$ for each $i$ and 
\item one crossing change between $L_i$ and $L_j$ for each $i\neq j$.
\end{enumerate}
\end{restatable}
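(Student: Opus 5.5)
The lower bound $\binom n2\le u_k(L)$ is the easy half. I would argue it as follows. Linking numbers are invariants of $C_2$-equivalence, hence of $C_k$-equivalence for every $k\ge 2$. So a $C_k$-trivial link has all pairwise linking numbers $0$. A crossing change between $L_i$ and $L_j$ changes $\lk(L_i,L_j)$ by exactly $\pm1$ and leaves the other linking numbers unchanged. Since $|\lk(L_i,L_j)|=1$ for each of the $\binom n2$ pairs, at least one inter-component crossing change per pair is needed. For $k=1$ the statement is trivial, since every link is $C_1$-trivial.

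For the upper bound I would reduce to Theorem~\ref{thm:main lk=0}, but in a sharpened form, and this is the step I expect to be the main obstacle. First, for each pair $i<j$ choose a crossing between $L_i$ and $L_j$ whose sign matches $\lk(L_i,L_j)$. Such a crossing exists in any diagram, since the linking number is half the signed count of such crossings. Change it; call the result $L'$, which has all pairwise linking numbers $0$ and costs $\binom n2$ changes. Applying Theorem~\ref{thm:main lk=0} to $L'$ directly would give too many additional changes. So the crossing changes should be chosen cleverly, and one must revisit the proof of Theorem~\ref{thm:main lk=0}.

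My guess at that proof's structure: induct on $k$, realizing $L$ as the unlink modified by claspers of degree $<k$. Each inter-component contribution is then absorbed into a pair of clasps between $L_i$ and $L_j$, at a cost of two crossing changes per pair; these two changes are needed because the linking number is $0$ and a net-zero change requires an even number. In the present case the linking number is $\pm1$. So I would modify the argument by letting the single required crossing change between $L_i$ and $L_j$ both kill the linking number and serve as the "universal" crossing. Concretely, present $L$ as a $C_k$-trivial link $T$ plus a degree-one clasper (a clasp) for each pair, together with claspers of degree $\ge 2$. Then use Habiro's calculus (sliding leaves, the IHX relation, and moving claspers modulo $C_{k}$) to push every higher-degree tree clasper so that its leaves run through a neighborhood of the pair clasps.

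The key move is the "clasp-pass" or Ohyama–Taniyama–Yamada trick: in the presence of a single clasp between $L_i$ and $L_j$, one crossing change at that clasp converts an arbitrary $C_m$-tree clasper supported near it into one of higher degree. This is the same mechanism by which a single crossing change makes any knot $C_k$-trivial. Here the self-crossing change on each $L_i$ plays the role of the OTY crossing for claspers with all leaves on $L_i$. The single inter-component crossing between $L_i$ and $L_j$ must absorb all the mixed trees whose leaves lie on only $L_i$ and $L_j$. Trees involving three or more components would be handled by sliding them onto the clasps for pairs they meet, using the fact that such crossing changes can be performed simultaneously. Iterating degree by degree up to $k$ gives a $C_k$-trivial link after at most $n$ self-crossing changes and $\binom n2$ mixed changes. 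The total is $\binom n2+n=\binom{n+1}2$ crossing changes.

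The delicate point is verifying that one crossing change per pair suffices for all degrees simultaneously. My plan there is to choose the realization of $L$ so that, for each degree $m$, the degree-$m$ claspers lie in a small ball near the chosen crossing. Changing that crossing then conjugates them by a full twist of the two strands, which raises degree modulo $C_k$. The required choices are made inductively before performing any crossing change.
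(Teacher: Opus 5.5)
\textbf{There is a genuine gap in the upper bound.} Your lower bound is correct for $k\ge 2$. For $k=1$ it actually fails, since every link is $C_1$-trivial and $u_1(L)=0$, so the statement tacitly assumes $k\ge2$.

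The gap sits exactly at the step you call delicate. You propose that changing the chosen crossing ``conjugates'' nearby degree-$m$ claspers by a full twist and thereby raises their degree. This does not work. A degree-$m$ tree disjoint from a degree-1 clasper $c$ is still a degree-$m$ tree for $L^c$. Habiro's calculus (Theorem~\ref{thm: Habiro moves}(1)) only says that a crossing change commutes with a $C_m$-move up to $C_{m+1}$, and commuting never makes the $C_m$-tree disappear. You also do not need, and the argument does not give, one crossing per pair that works for all $k$ at once. The sequence may depend on $k$, and in the actual proof the crossing changes are modified at every inductive step. Note too that in the paper Theorem~\ref{thm:main lk=0} is deduced from this statement, so there is no independent proof of it to sharpen.

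\textbf{The missing idea} is to absorb each degree-$k$ tree into a degree-1 clasper, replacing that clasper by a different degree-1 clasper.
\begin{enumerate}
\item Start with a simple forest $F=\{c_{ij}\mid 1\le i\le j\le n\}\cup F^k$ for $L$ with $L^F$ the unlink. The base case uses Murakami--Nakanishi: once the linking numbers vanish the link is $C_2$-trivial. The $c_{ii}$ are trivial self-claspers inserted on purpose, each realizing two Reidemeister~I moves.
\item Any tree $T$ of degree $k\ge2$ has a node $N$ adjacent to two leaves $\ell_1,\ell_2$, say on $L_i$ and $L_j$ (possibly $i=j$). This fork, not the set of components $T$ meets, decides which $c_{ij}$ is used. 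So trees meeting three or more components need no separate treatment, and neither do trees on $L_i\cup L_j$ whose forks lie on a single component.
\item Working modulo trees of degree $>k$ (Theorem~\ref{thm: Habiro moves}(1)--(4)), slide leaves so that $\ell_1,\ell_2$ sit next to the leaves of $c_{ij}$ along $L$. Then make the arc $\ell_1 N\ell_2$ parallel to the edge $e_{ij}$ of $c_{ij}$, and adjust the half-twists.
\item Habiro's moves 6 and 12 now replace $T\cup c_{ij}$ by $T'\cup c_{ij}$, where a disk-leaf of $T'$ meets only $e_{ij}$. Surgery on $T'$ just re-ties the band of $c_{ij}$, producing a new degree-1 clasper $c_{ij}'$, i.e.\ a new crossing change.
\end{enumerate}
Repeating this for every degree-$k$ tree keeps exactly $\binom{n+1}{2}$ degree-1 claspers and leaves only trees of degree $>k$. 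That is the inductive step your plan lacks.
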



The bound ${n\choose 2}\le u_k(L)$ simply comes from the fact that $\Lambda(L)\le u_k(L)$.  We can get similar bounds for any arrangement of linking numbers. Indeed, both of the prior theorems are consequences of the following:

\begin{theorem}\label{thm:main abitrary linking}
Let $L=L_1\cup\dots\cup L_n$ be a link and define $\Lambda'(L) = \Sum_{1\le i<j\leq n}\left| | \lk(L_i,L_j)|-1\right|$.  For any $k\in \N$, $\Lambda(L)\le u_k(L)\le {n+1\choose 2}+\Lambda'(L)$.
\end{theorem}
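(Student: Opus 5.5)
The lower bound $\Lambda(L)\le u_k(L)$ is immediate. Linking numbers are invariants of $C_2$-equivalence, hence of $C_k$-equivalence for every $k\ge 2$. A $C_k$-trivial link therefore has all pairwise linking numbers zero. Each crossing change between $L_i$ and $L_j$ changes $\lk(L_i,L_j)$ by exactly $\pm1$ and leaves every other linking number fixed. So at least $\sum_{i<j}|\lk(L_i,L_j)|$ crossing changes are required.

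For the upper bound I would reduce to the case $|\lk(L_i,L_j)|=1$ for all $i<j$. For each pair with $\lk(L_i,L_j)=0$, I would perform one crossing change between $L_i$ and $L_j$, for instance by isotoping a small finger of $L_i$ to clasp $L_j$ and changing one crossing. This costs one move, which is $\bigl||0|-1\bigr|$. For each pair with $|\lk(L_i,L_j)|=m\ge 2$, I would perform $m-1$ crossing changes between $L_i$ and $L_j$, each of which lowers $|\lk|$ by one. This costs $\bigl|m-1\bigr|$ moves. Pairs with $|\lk|=1$ cost nothing. In total this uses exactly $\Lambda'(L)$ crossing changes and produces a link $L'$ with $|\lk(L'_i,L'_j)|=1$ for all $i\neq j$. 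Crossing changes between different components can always be realized in this way: after an isotopy, any two components have a crossing in some diagram, and changing a crossing of the appropriate sign moves the linking number in the desired direction. A finger move creates two new crossings of opposite sign, so either direction is available.

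Next I would apply the upper bound of Theorem~\ref{thm:main lk=1} to $L'$. It says $L'$ can be made $C_k$-trivial with at most one self-crossing change on each component and one crossing change per pair, that is, at most $n+\binom n2=\binom{n+1}2$ moves. Concatenating the two stages gives $u_k(L)\le \binom{n+1}2+\Lambda'(L)$.

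The whole difficulty therefore lies in Theorem~\ref{thm:main lk=1}, which I would prove from the corresponding statement for all linking numbers zero. The step I expect to be the main obstacle is showing that a link with all $|\lk|=1$, after choosing one crossing per pair so that the linking numbers become $0$, can be pushed to a $C_k$-trivial link using only the remaining single self-crossing per component. The approach I would try first is an induction on $k$, extending the Ohyama--Taniyama--Yamada argument. Suppose the link is $C_{k}$-equivalent to a $C_k$-trivial one. I would express the difference as a product of degree-$k$ tree claspers and use the crossing-change freedom to absorb them. A carefully chosen crossing (on each component, or per pair) can be realized as a clasp, and sliding higher-degree claspers along it replaces them by claspers of higher degree. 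This is how OTY show $u_k(J)\le 1$ with a single crossing for every $k$.
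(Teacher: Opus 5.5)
Your argument is the paper's: the lower bound comes from the $C_2$-invariance of linking numbers (so it genuinely needs $k\ge 2$, as you tacitly assume, since every link is $C_1$-trivial), and the upper bound spends $\Lambda'(L)$ crossing changes to make every $|\lk(L_i,L_j)|=1$ and then invokes Theorem~\ref{thm:main lk=1}. One caution about your closing remarks: do not derive Theorem~\ref{thm:main lk=1} from the linking-number-zero case, because the paper obtains Theorem~\ref{thm:main lk=0} as a special case of this very theorem, and that route would cost too many crossing changes anyway; the paper instead proves it directly by an induction on $k$ like the one you sketch, absorbing each degree-$k$ tree into a degree-one clasper $c_{ij}$ via Habiro's moves 6 and 12.
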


We take a moment to motivate and demystify  $\Lambda'(L)$.  First note that $\Lambda'(L)$ records precisely the number of crossing changes needed to get a link to which Theorem~\ref{thm:main lk=1} applies.  Next, since  $\left|| \lk(L_i,L_j)|-1\right|\leq |\lk(L_i,L_j)|+1$, it follows that $ {n+1\choose 2}+\Lambda'(L)\leq \Lambda(L)+{n+1\choose 2}+{n\choose 2} = \Lambda(L)+n^2$. Thus, we arrive at a weaker but easier to parse result.

\begin{corollary}
    Let $L=L_1\cup\dots\cup L_n$ be a link. For any $k\in \N$, $\Lambda(L)\le u_k(L)\le n^2+\Lambda(L)$.
\end{corollary}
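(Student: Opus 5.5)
The corollary follows from Theorem~\ref{thm:main abitrary linking} and an elementary inequality, so no new geometric input is needed.

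\textbf{Lower bound.} The inequality $\Lambda(L)\le u_k(L)$ is part of Theorem~\ref{thm:main abitrary linking}. It can also be seen directly. Each crossing change between $L_i$ and $L_j$ changes $\lk(L_i,L_j)$ by exactly $\pm1$, and a self-crossing change leaves every linking number unchanged. Pairwise linking numbers are invariants of $C_2$-equivalence, and $C_k$-equivalence implies $C_2$-equivalence for $k\ge 2$. Hence a $C_k$-trivial link has all pairwise linking numbers zero, and any sequence of crossing changes ending at such a link contains at least $|\lk(L_i,L_j)|$ changes between $L_i$ and $L_j$ for each pair $i<j$. For $k=1$ every link is $C_1$-trivial, since $C_1$-moves are crossing changes; in that degenerate case I would read the statement with the convention $k\ge 2$ implicit, as in the theorem.

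\textbf{Upper bound.} Theorem~\ref{thm:main abitrary linking} gives $u_k(L)\le {n+1\choose 2}+\Lambda'(L)$, so it suffices to show ${n+1\choose 2}+\Lambda'(L)\le n^2+\Lambda(L)$. For any integer $m$, the triangle inequality gives $\left||m|-1\right|\le |m|+1$. Summing this over the ${n\choose 2}$ pairs $i<j$ gives
\[
\Lambda'(L)\le \Lambda(L)+{n\choose 2}.
\]
Since ${n+1\choose 2}+{n\choose 2}=\frac{(n+1)n+n(n-1)}{2}=n^2$, we obtain $u_k(L)\le n^2+\Lambda(L)$.

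There is no real obstacle, because all the geometric content sits in Theorem~\ref{thm:main abitrary linking}. The only point needing a moment's care is confirming the binomial identity ${n+1\choose 2}+{n\choose 2}=n^2$, and that the pairwise bound is summed over exactly ${n\choose 2}$ pairs.
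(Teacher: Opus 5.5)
Your argument is correct and matches the paper's: the lower bound is taken from Theorem~\ref{thm:main abitrary linking}, and the upper bound comes from summing $\left||\lk(L_i,L_j)|-1\right|\le|\lk(L_i,L_j)|+1$ over the ${n\choose 2}$ pairs and using ${n+1\choose 2}+{n\choose 2}=n^2$. Your caveat is also correct and worth keeping: every link is $C_1$-trivial, so the lower bound $\Lambda(L)\le u_k(L)$ genuinely requires $k\ge 2$.
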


In the case that $\lk(L_i,L_j)\neq 0$, for all $i\neq j$, then $||\lk(L_i,L_j)|-1|=|\lk(L_i,L_j)|-1$, so 
\begin{eqnarray*}
u_k(L)&\leq& {n+1\choose 2}+\Sum_{1\leq i<j\leq n}||\lk(L_i,L_j)|-1|
\\&=& {n\choose 2}+n+\sum_{1\leq i<j\leq n}|\lk(L_i,L_j)|-\sum_{1\leq i<j\leq n}1\\
&=& n+\Lambda(L)
\end{eqnarray*}
This becomes particularly noteworthy if additionally every component of $L$ is not $C_k$-trivial.  In this case, any sequence of crossing changes resulting in a $C_k$-trivial link must include a self-crossing change on each component and must also include enough crossing changes between different components to eliminate all of the linking numbers.  

\begin{corollary}\label{cor:All linking nonzero}
    Let $L=L_1\cup\dots\cup L_n$ be a link such that for all $i,j$, $lk(L_i,L_j)\neq 0$ and $L_i$ is not $C_k$-trivial.  Then $u_k(L)= n+\Lambda(L)$.
\end{corollary}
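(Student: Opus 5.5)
The upper bound is already in hand. By Theorem~\ref{thm:main abitrary linking}, together with the computation just above the corollary, every link with all pairwise linking numbers nonzero satisfies $u_k(L)\le {n+1\choose 2}+\Lambda'(L) = n+\Lambda(L)$. The substance is therefore the matching lower bound $u_k(L)\ge n+\Lambda(L)$, which I would prove by splitting an arbitrary trivializing sequence of crossing changes into inter-component and self-crossing changes and bounding each kind separately.

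Fix a sequence of $m$ crossing changes transforming $L$ into a $C_k$-trivial link $L'$. Write $m=a+b$, where $a$ counts crossing changes between distinct components and $b$ counts self-crossing changes.

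\textbf{Bounding $a$.} Linking numbers are invariants of $C_2$-equivalence, and the unlink has all linking numbers zero. So we may assume $k\ge 2$; the case $k=1$ is degenerate, since every link is $C_1$-trivial and the hypothesis that no $L_i$ is $C_k$-trivial then cannot hold. Hence $\lk(L'_i,L'_j)=0$ for all $i<j$. A crossing change between $L_i$ and $L_j$ changes $\lk(L_i,L_j)$ by exactly $\pm1$ and leaves all other linking numbers unchanged. A self-crossing change changes no linking number. Therefore at least $|\lk(L_i,L_j)|$ of the crossing changes occur between $L_i$ and $L_j$, and summing over pairs gives $a\ge\Lambda(L)$.

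\textbf{Bounding $b$.} I claim each component must receive at least one self-crossing change. First, $C_k$-equivalence of links restricts to $C_k$-equivalence of each component: deleting the other components from a clasper surgery presentation either leaves a clasper for the knot or a clasper with a leaf that is now a trivial loop, and surgery on the latter is trivial. Consequently every component $L'_i$ of the $C_k$-trivial link $L'$ is a $C_k$-trivial knot. Second, the knot type of the $i$th component is changed only by self-crossing changes on $L_i$; crossing changes involving other components, or self-crossings of other components, leave $L_i$ alone up to isotopy as a knot. So if $L_i$ received no self-crossing change, then $L'_i$ would be isotopic to $L_i$. That is impossible, because $L_i$ is not $C_k$-trivial. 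Hence each of the $n$ components receives a self-crossing change, and $b\ge n$.

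Combining the two bounds gives $m=a+b\ge \Lambda(L)+n$, and together with the upper bound this yields $u_k(L)=n+\Lambda(L)$. The only step requiring care is the restriction claim in the bound on $b$, namely that a sublink of a $C_k$-trivial link is $C_k$-trivial. This is standard, since forgetting components commutes with clasper surgery, but it is worth stating explicitly.
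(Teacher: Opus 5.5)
Your proposal is correct and follows essentially the same route as the paper. The upper bound comes from Theorem~\ref{thm:main abitrary linking} via the identity $\binom{n+1}{2}+\Lambda'(L)=n+\Lambda(L)$ when all linking numbers are nonzero. The lower bound comes from needing $\Lambda(L)$ crossing changes between components plus at least one self-crossing change on each component. You also make explicit the standard facts the paper uses tacitly: each component of a $C_k$-trivial link is a $C_k$-trivial knot, and only self-crossing changes on $L_i$ alter its knot type.
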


$C_k$-equivalence is related to 4-dimensional topology in a few ways.  In \cite{CST}, Conant-Schneiderman-Teichner introduce Whitney tower concordance by asking when links cobound immersed annuli in $S^3\times[0,1]$ which extend to degree $k$ Whitney towers.  (Informally, a Whitney tower is an object inspired by the failure of the Whitney trick in dimension 4.  It is an immersed surface $S$ together with a collection of immersed Whitney disks, each pairing intersections between $S$ and lower degree disks.)  In \cite{CST2025}, they relate this to $C_k$-concordance, the equivalence relation generated by concordance and $C_k$-equivalence, first defined and studied in \cite[Section 5]{MeiYas2010}.
More precisely, they prove that $L$ is $C_k$-concordant to the unlink if and only if $L$ bounds an immersed disk which extends to a degree $k$ Whitney tower.

Tracing out a sequence of crossing changes yields an immersed union of annuli in $S^3\times[0,1]$ with one point of intersection for each crossing change.  By doing this for a sequence of ${n+1\choose 2}+\Lambda'(L)$ crossing changes from a link $L$ to a $C_{k+1}$-trivial link, and then stacking these immersed annuli atop an order $k+1$ Whitney tower, we get the following corollary.

\begin{corollary}
Let $L$ be an $n$-component link and $k\in\N$.   Then $L$ bounds a union of immersed disks so that all but ${n\choose 2}+\Lambda'(L)$ of the intersections and self intersections of these disks are paired by a degree $k$ Whitney tower.  
\end{corollary}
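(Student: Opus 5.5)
The plan is to take the $C_{k+1}$ bound from Theorem~\ref{thm:main abitrary linking} and turn the resulting sequence of crossing changes into a singular cobordism in $S^3\times[0,1]$ on which most double points carry Whitney disks. First apply Theorem~\ref{thm:main abitrary linking} with $k+1$ in place of $k$. This gives a sequence of at most ${n+1\choose 2}+\Lambda'(L)$ crossing changes taking $L$ to a $C_{k+1}$-trivial link $L'$. Tracing the isotopies and crossing changes gives an immersed union of annuli $A\subset S^3\times[0,1]$ from $L$ to $L'$, with one transverse double point per crossing change.

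Next, since $L'$ is $C_{k+1}$-trivial, it is in particular $C_{k+1}$-concordant to the unlink. By the result of Conant--Schneiderman--Teichner quoted above \cite{CST2025}, $L'$ bounds immersed disks $D$ supporting a Whitney tower. Stacking $D$ below $A$ gives immersed disks bounded by $L$. All intersections of $D$ are paired by that tower, so the only unpaired intersections come from $A$. I will invoke the result with the order chosen so that, after the bookkeeping below, the tower still has degree $k$.

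The key step is to pair some of the intersections of $A$ as well. Theorem~\ref{thm:main abitrary linking} is derived via Theorem~\ref{thm:main lk=1}, whose proof supplies a specific sequence. That sequence uses $\Lambda'(L)$ changes to reach $|\lk|=1$, then one change between each pair $L_i,L_j$, and at most one self-crossing change on each $L_i$. That last group contributes at most $n$ intersections. There are ${n+1\choose 2}+\Lambda'(L)-n={n\choose 2}+\Lambda'(L)$ intersections outside it. I would show that the $n$ self-intersection points of the annuli can be paired by Whitney disks, or otherwise absorbed into the degree $k$ tower. The mechanism is the Ohyama--Taniyama--Yamada phenomenon: a single self-crossing change together with $C_{k+1}$-moves realizes a ``Whitney-type'' cancellation. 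A self-crossing change on a knot that is followed by $C_{k+1}$-trivialization corresponds to a self-intersection of a disk that is paired by a Whitney disk. This Whitney disk's interior intersections are of degree $k$, coming from the clasper.

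Concretely, I would model the self-crossing change as surgery on a degree $1$ clasper (a single edge). I would then check that the clasper-tree used in the knot argument exhibits the two sheets near the double point as bounding a Whitney disk within the tower. Matching Whitney-tower order conventions with clasper degree is why I pass to $C_{k+1}$ rather than $C_k$. This pairing step is the main obstacle. If it fails, the fallback is to count differently: use Theorem~\ref{thm:main lk=1}'s structure to cancel self-intersections in pairs of opposite sign via Whitney disks whose boundaries lie on one annulus. That still leaves exactly ${n\choose 2}+\Lambda'(L)$ unpaired intersections.
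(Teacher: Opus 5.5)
Your first two paragraphs are the paper's entire argument. You trace the ${n+1\choose 2}+\Lambda'(L)$ crossing changes given by Theorem~\ref{thm:main abitrary linking} (applied with $k+1$) as immersed annuli, and stack them on the Whitney tower that the $C_{k+1}$-trivial end link bounds by \cite{CST2025}. The paper does nothing beyond this.

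The genuine gap is your ``key step'' of pairing the $n$ self-intersection points that come from the self-crossing changes. You give no construction. Note that a single double point cannot be paired by a Whitney disk at all; you would first have to add a kink of opposite sign. More seriously, no construction can exist in general. Take $n=1$ and $L$ the trefoil. Then ${n\choose 2}+\Lambda'(L)=0$, and the sequence is a single self-crossing change to the unknot. Your step would therefore produce an immersed disk bounded by the trefoil with every intersection paired by a degree $k$ tower. By the characterization from \cite{CST2025} that you yourself invoke, the trefoil would then be $C_k$-concordant to the unknot.

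For $k\ge 3$ this contradicts the Arf invariant. Arf is a concordance invariant. It is also the mod $2$ reduction of the degree $2$ finite type invariant $a_2$ (the $z^2$-coefficient of the Conway polynomial), so it is preserved by $C_3$-moves, hence by $C_k$-moves for $k\ge 3$. The trefoil has Arf invariant $1$.

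Your fallback fails too. Each annulus carries at most one self-intersection from this sequence, since there is at most one self-crossing change per component and all other changes are between different components. So there are no opposite-sign pairs on a single annulus to cancel. Any kinks you add to manufacture such pairs lead back to the obstruction above.

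What the trace-and-stack argument actually proves is that all but ${n+1\choose 2}+\Lambda'(L)$ of the intersections are paired. That is also all the paper's justification supports, since it explicitly counts ${n+1\choose 2}+\Lambda'(L)$ crossing changes. The ${n\choose 2}$ in the statement appears to be a slip, not something a cleverer pairing can recover. You should drop the pairing step rather than try to repair it. With it removed, your first two paragraphs give a correct proof of the corrected count.
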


\subsection{Comparison with the work of Ohyama, Taniyama and Yamada on knots}

This work extends results of Ohyama, Taniyama, and Yamada from the setting of knots to links.  We take a moment an compare our strategy with theirs.  In \cite{OhTaYa2002}, Ohyama, Taniyama, and Yamada explore the fact that $C_n$-equivalence is generated by band sums with certain Brunnian links that they call special $C_n$ or $C_n'$ link models.    They then use that $C_1$-equivalence is trivial (for knots) as a base case to conclude that every knot is one crossing change from being $C_1$-trivial.  They then proceed inductively.  Up to $C_{n+1}$-equivalence, it does not matter how the bands use to sum on a $C_n$ model link with the underlying knot and with each other.  As a consequence they arrange so that all of these bands sums lie in a particular configuration and can be undone with a single crossing change.  

Our own arguments follow a similar logic, $C_1$-equivalence is determined by pairwise linking number, we then proceed inductively.
However, instead of using the structure of $C_n$-link models, we follow the language of clasper surgery developed by Habiro \cite{Hab1}.  Moreover, instead of undoing special $C_n$ link models, we explain how to replace a crossing change and a clasper of degree $k$ by a different crossing change and claspers of a greater degree.  This setting allows us access to the tools of Habiro which describe the structure needed to simplify clasper presentations.

\subsection{Are these bounds sharp?}

As we have seen in Corollary~\ref{cor:All linking nonzero}, in the presence of non-zero pairwise linking numbers, $u_k(L)$ is sometimes easy to determine, but if all pairwise linking numbers vanish, $u_k(L)$ is harder to determine.  By Theorem~\ref{thm:main lk=0} for any  $n$-component link $L$ with vanishing pairwise linking numbers and any $k\in \N$, $u_k(L)\le n^2$.  Is this bound sharp, or can one find a better upper bound via another analysis?  More precisely, for any $n$ and any $k$ define $C(n,k)$ to be the maximum value of $u_k(L)$ where $L$ is taken to be any $n$-component link with vanishing pairwise linking numbers.  

\begin{question}
Let \(n,k\in \N\).  Is $C(n,k)=n^2$?
\end{question}

In Section~\ref{sect: lower bound} we present some lower bounds on $C(n,k)$.  First, $C_1$-equivalence is generated by crossing changes, and  $C_2$-equivalence is generated by the $\Delta$ move of Figure~\ref{subfig: Delta move}.  Both are unlinking operations for links with vanishing pairwise linking numbers \cite{MN89}, so $C(n,k)=0$ if $k=1,2$.  Next, by \cite{Ohyama2000, OhTaYa2002}, any knot can be reduced to a $C_k$-trivial knot in one crossing change, so $C(1,k)=1$ for all $k>2$.  We prove the following, where the upper bounds come from Theorem~\ref{thm:main lk=0}.

\begin{theorem}
For any $k\geq4$, we have the following:
\begin{itemize}
\item (Theorem~\ref{thm: c2>=3}) $3\le C(2,k)\le 4$.
\item (Theorem~\ref{thm: c3>=6}) $5\le C(3,3)\le 9$ and $6\le C(3,k)\le 9$.
\item (Theorem~\ref{thm cn lower bound}) $2\ceil{\frac{n(n-2)}{3}}+n\le C(n,k)\le n^2$ when $n\ge 3$.
\end{itemize}
\end{theorem}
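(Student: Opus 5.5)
The upper bounds need no new work. Every link counted by $C(n,k)$ has vanishing pairwise linking numbers, so Theorem~\ref{thm:main lk=0} gives $u_k(L)\le n^2$, hence $C(2,k)\le 4$ and $C(3,k)\le 9$. All the content is in the lower bounds. For each of these I would exhibit an explicit link $L$ with vanishing linking numbers and then show that every sequence of fewer than the claimed number of crossing changes leaves a link that is not $C_k$-trivial. To detect non-triviality I would use invariants of $C_k$-equivalence: Milnor invariants of length at most $k$, via Theorem 7.2 of \cite{Hab1}, and, for individual components, finite type knot invariants of degree less than $k$. A concrete choice for the latter is the second coefficient $a_2$ of the Conway polynomial, which is a $C_3$-invariant.

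The key bookkeeping step is to record how a single crossing change affects these invariants. A crossing change between $L_i$ and $L_j$ changes $\lk(L_i,L_j)$ by $\pm1$. Since the final link has all linking numbers zero, the number of crossing changes between each pair $i,j$ is even. So for every pair the crossing changes come in an even number, and a pair that must be touched costs at least two. A self-crossing change on $L_i$ changes $a_2(L_i)$ by the linking number of the two smoothed sublinks. Between two components, the crossing changes also shift length-$3$ and length-$4$ Milnor invariants. I would in particular track $\mu(1122)$-type invariants, or the Sato--Levine invariant $\bar\mu(1122)$, which becomes well defined once $\lk=0$. A pair of opposite-sign crossing changes between $L_i$ and $L_j$ can change $\bar\mu(iijj)$ by a controlled amount that depends on how the pair is placed.

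For $C(2,k)\ge3$, I would take a $2$-component link with both components knotted ($a_2\neq0$) and $\bar\mu(1122)\neq0$. The component invariants force at least one self-crossing change on each component, so that alone requires $2$. A total of $2$ would mean exactly one self-crossing change per component and none between the components. Self-crossing changes preserve the link homotopy class, but $\bar\mu(1122)$ is not a link homotopy invariant, so I would instead need to show that such moves cannot simultaneously kill both $a_2$'s and $\bar\mu(1122)$. I would attempt this with a parity or sign argument relating the change in $\bar\mu(1122)$ under a self-crossing change to the linking numbers of the smoothed pieces with the other component. For $n=3$, I would add a nonzero $\mu(123)$, a link homotopy invariant that forces at least two inter-component crossing changes, on top of the three self-crossing changes. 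That gives $5$ for $k=3$. For $k\ge4$, I would add a length-$4$ obstruction forcing one more change, bringing the total to $6$.

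For general $n$, I would build $L$ from knotted components together with many triples carrying $\mu(ijk)\neq0$ and pairs carrying $\bar\mu(iijj)\neq0$. The $n$ accounts for the forced self-crossing changes. The term $2\lceil n(n-2)/3\rceil$ should come from a covering argument: each pair of components that receives crossing changes receives at least two, and the set of touched pairs must hit every obstructing triple. Choosing the triples as a suitable design, such as a Steiner-type system, forces about $n(n-2)/3$ pairs to be touched. The main obstacle I expect is the combinatorial and invariant-theoretic step showing that touching a pair with only two crossing changes cannot simultaneously repair all the obstructions involving that pair, and that self-crossing changes cannot substitute for inter-component ones. Concretely, this requires formulas for how length-$\le4$ Milnor invariants change under a crossing change, and I would derive these from clasper calculus: a crossing change is a $C_1$-clasper surgery, and its commutators with existing claspers produce higher-degree terms whose Milnor invariants can be read off.
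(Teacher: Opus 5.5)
Your upper bounds and your argument for $C(3,3)\ge 5$ match the paper. The step $C(2,k)\ge 3$, however, is missing its key lemma, and with the hypothesis you chose it cannot succeed; the extra change for $C(3,k)\ge 6$ depends on it.

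The needed lemma is that a self-crossing change on one component of a 2-component link with $\lk=0$ changes $\bar\mu(1122)$ by $\pm\lambda^2$, where $\lambda$ is the linking number of one smoothed piece with the other component. This is \cite[Theorem 1.3]{JinThesis}. It also follows from your skein-relation approach applied to the $z^3$-coefficient $a_3$, which agrees with $\bar\mu(1122)$ up to sign: the smoothed 3-component link has $a_2=-\lambda^2$. So two self-crossing changes can only cancel values of the form $\pm a^2\pm b^2$.

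Assuming only $\bar\mu(1122)\neq 0$ and using a parity or sign argument cannot exclude this. Both signs occur, and every integer not congruent to $2$ mod $4$ is a difference of two squares, since $2t+1=(t+1)^2-t^2$ and $4t=(t+1)^2-(t-1)^2$. The paper instead assumes $\bar\mu(1122)>0$ and $\bar\mu(1122)\equiv 6\pmod 8$. Squares are $0,1,4$ mod $8$, so neither $a^2+b^2$ nor $a^2-b^2$ is $6$ mod $8$, and positivity rules out $-a^2-b^2$.

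For $C(3,k)\ge 6$, your ``length-4 obstruction'' should be exactly this condition on every $L_i\cup L_j$. With five changes (one self-crossing change per component and two between, say, $L_1$ and $L_2$), the sublink $L_1\cup L_3$ has seen only one self-crossing change per component. This is because crossing changes between $L_1$ and $L_2$ become isotopies once $L_2$ is forgotten.

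The general-$n$ step has a more serious gap: a triple-covering argument cannot produce $2\ceil{n(n-2)/3}$. If the touched pairs need only meet every triple with $\bar\mu(ijk)\neq 0$, it suffices that the untouched pairs form a triangle-free graph. Splitting the components into two halves and touching exactly the pairs within each half uses $\binom{n}{2}-\lfloor n^2/4\rfloor\approx n^2/4$ pairs, so only about $n^2/2$ inter-component changes are forced. Already for $n=4$, the pairs $\{1,2\},\{3,4\}$ meet all four triples, forcing $4$ inter-component changes rather than $6$. A Steiner system makes this worse, since one pair per triple (about $n^2/6$ pairs) suffices.

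The mod-$8$ pair obstructions do not close the gap either. One extra self-crossing change on a component defeats them numerically ($6=4+1+1$), so they raise the bound by at most $n$, leaving roughly $n^2/2+O(n)$, which is below $2n^2/3$.

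The missing idea is an obstruction carried by 4-component sublinks. The paper takes $L$ whose every 4-component sublink is the link $J$ of \cite[Proposition 7.4]{BDMOV2025}, which needs six crossing changes, all between different components, to become homotopy trivial. It then uses the counting in the proof of \cite[Theorem 7.2]{BDMOV2025} to force $2\ceil{n(n-2)/3}$ inter-component changes, noting that a $C_4$-trivial 4-component link is homotopy trivial. Finally, it makes each component non-$C_4$-trivial to add the $n$ self-crossing changes.
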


The final bullet point above roughly says $\frac{2}{3}n^2\le C(n,k)\le n^2$.  

All of this suggests that computing $u_k(L)$ may be a much more tractable problem than determining the actual unlinking number.  

\begin{question}
Is there an algorithm which determines $u_k(L)$? And is there a polynomial-time algorithm?
\end{question}

\subsection{Does the collection of all Milnor invariants tell you more?}

Our main result concludes that if you fix some $k\in \N$, then other than the pairwise linking numbers, Milnor's invariants of length up to $k$ (or any invariants of $C_k$-equivalence) tell you very little about the unlinking number.  If you try to study all Milnor invariants at once, then this question becomes harder for a few reasons.  First, these invariants are not well defined after the first non-vanishing.  Their indeterminacy has been closely studied in e.g.~\cite{HL1, HL2, KiMi2023, DNOP2020,AAD20}.  Secondly, the collection of all Milnor invariants is not an invariant of $C_k$-equivalence for any fixed $k\in\N$, so the strategy proposed in this paper cannot work.  

\begin{question}
Is there a bound $C_n\in \N$ so that every $n$-component link $L$ with vanishing pairwise linking numbers can be changed to a link with vanishing Milnor invariants in at most $C_n$ crossing changes?
\end{question}

The most famous class of links with vanishing Milnor invariants is that of boundary links.  These are links whose components bound pairwise disjoint Seifert surfaces.  Thus, it makes sense to ask a more geometric version of this question, 

\begin{question}
Is there some $C_n\in \N$ so that every $n$-component link $L$ with vanishing pairwise linking numbers can be changed to a boundary link in at most $C_n$ crossing changes?
\end{question}

When $n=2$, this question is resolved in \cite{AADG19}.  Any 2-component link with vanishing linking numbers is two crossing changes from being a boundary link. 

\subsection{Outline of the paper}

In Section~\ref{sect: background}, we recall  the theory of claspers and $C_k$ equivalence.  We close the section with a summary of a few of the moves used in \cite{Hab1} to produce a group structure on $C_k$-trivial links up to $C_{k+1}$ equivalence.  In Section~\ref{sect: proof of main theorem}, we present an inductive algorithm which produces for any link the desired sequence of crossing changes to get a $C_k$-trivial link.  Finally, in Section~\ref{sect: lower bound} we produce some links that cannot be reduced to $C_k$-trivial links in a small number of crossing changes.  

%

\section{Claspers, Clasper Surgery, and $C_k$-equivalence.}\label{sect: background}
We begin with an explanation of the theory.  A basic clasper $c$ is an embedded, twice-punctured disk in a 3-manifold  together with a decomposition of $c$ into two annuli joined by a band as in Figure~\ref{fig:simpleClasper}. 
This decomposition results in an identification of a regular neighborhood  of $c$ with a handlebody of genus 2 and a framed 2-component link $L_c$ as in Figure~\ref{fig:simpleClasperHood}.  Surgery along $c$ is given by performing surgery along $L_c$.  In the special case that $c$ is embedded into a link complement and the two annuli extend to embedded disks which intersect the link transversely, then this surgery has the effect of passing the strands of the link which cross through one of these disks through the other, as in Figure~\ref{fig:simpleSurgery}

\begin{figure}
     \centering
     \begin{subfigure}[b]{0.3\textwidth}
         \centering
         \begin{tikzpicture}
         \node at (0,0){\includegraphics[width=.9\textwidth]{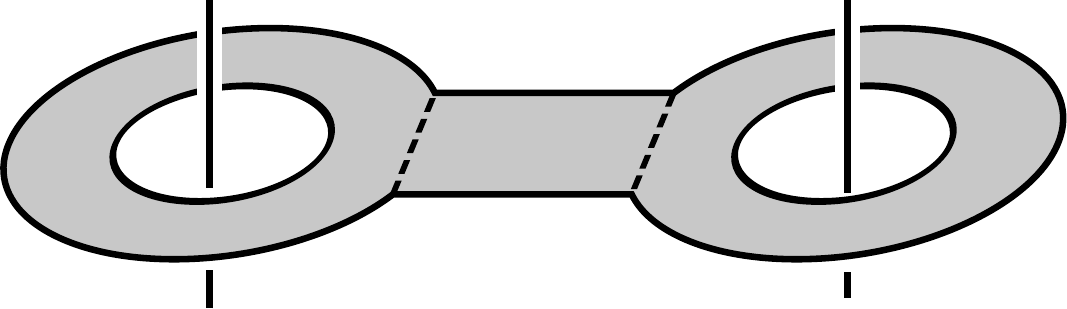}};
         \end{tikzpicture}
         \caption{}
         \label{fig:simpleClasper}
     \end{subfigure}
     \begin{subfigure}[b]{0.3\textwidth}
     \centering
         \begin{tikzpicture}
         \node at (0,0){\includegraphics[width=.9\textwidth]{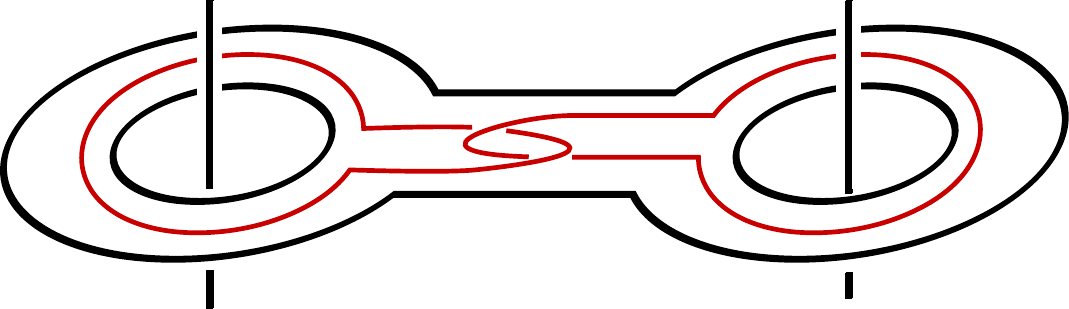}};
         \end{tikzpicture}
         \caption{}
         \label{fig:simpleClasperHood}
     \end{subfigure}
      \begin{subfigure}[b]{0.3\textwidth}
     \centering
         \begin{tikzpicture}
         \node at (0,0){\includegraphics[width=.7\textwidth]{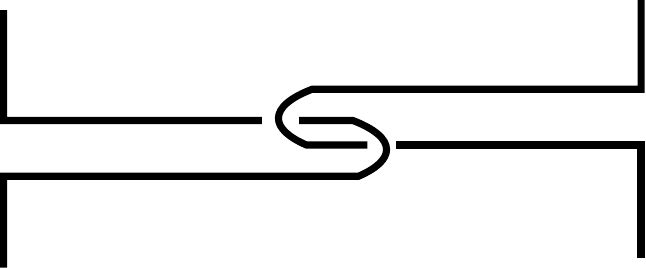}};
         \end{tikzpicture}
         \caption{}
         \label{fig:simpleSurgery}
     \end{subfigure}

        \caption{(a) A basic clasper, $C$, in the complement of a link $J$.  (b) A 2-component link sitting in a neighborhood of $C$ which inherits the blackboard framing. (c) Modifying $J$ by surgery along $C$.  
        }
        \label{fig: Simple clasper And Hood}
\end{figure}

A \emph{clasper} $c$ in a $3$--manifold $M$ 
is a compact, orientable, possibly disconnected surface embedded in $M$, equipped with a decomposition $c = A \cup B$.  See Figure~\ref{fig:clasperAcupB}.  Here $A$ is a disjoint union of subsurfaces, called \emph{constituents}, and consists of annular leaves, which are annuli, as well as disk leaves, nodes, and boxes, which are disks. (Note \cite{Hab1} refers to annular leaves as simply leaves.) $B$ is a disjoint union of bands, called \emph{edges}, connecting boundary arcs of constituents. 
The constituents and edges satisfy the following incidence relations: leaves and disk-leaves have one incident edge, 
nodes have three.  Boxes are incident to three edges, two of which are called inputs, and the other an output, as in Figure~\ref{fig: surgery box}.

\begin{figure}
     \centering
     \begin{subfigure}[b]{0.45\textwidth}
     \centering
        \begin{tikzpicture}
         \node at (0,0){\includegraphics[width=.9\textwidth]{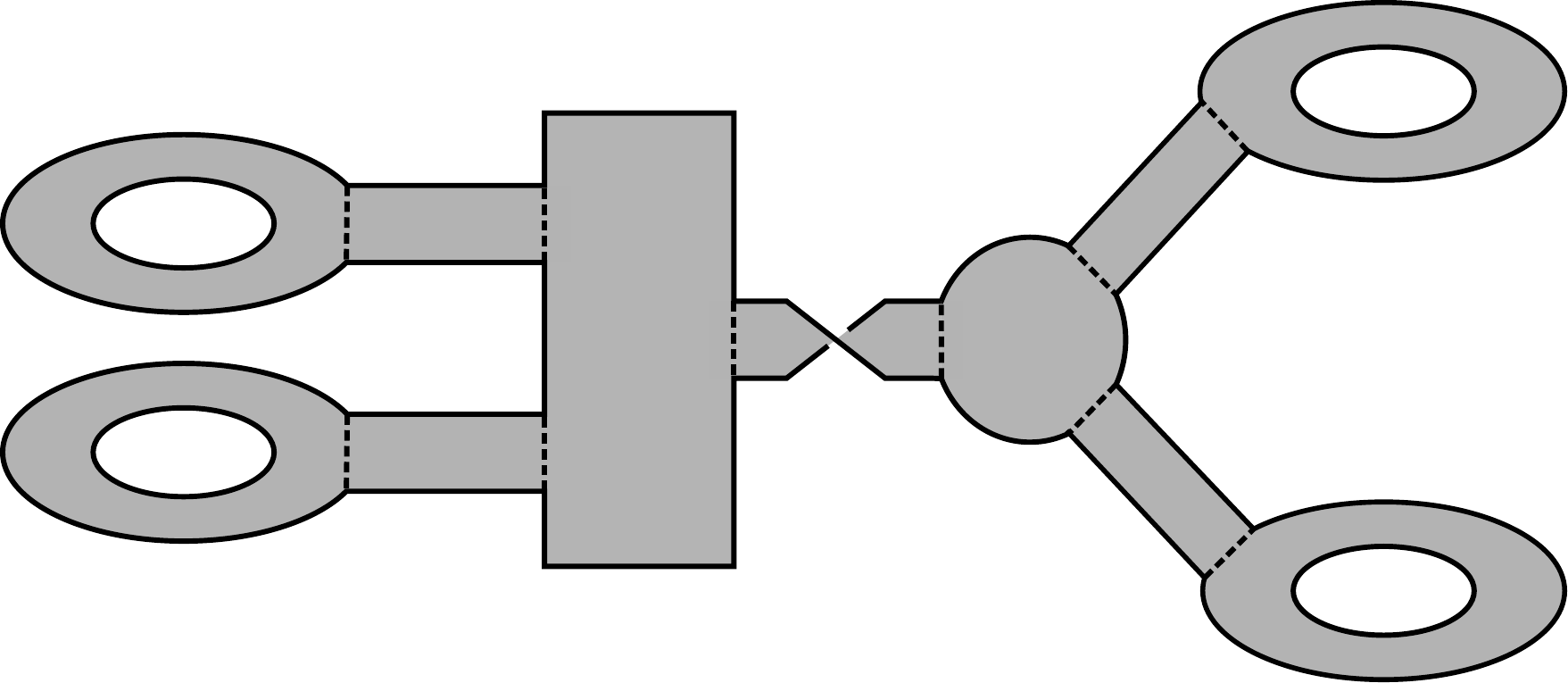}};
         \end{tikzpicture}
         \caption{}\label{fig:clasperAcupB}
     \end{subfigure}
     \begin{subfigure}[b]{0.45\textwidth}
     \centering
         \begin{tikzpicture}
         \node at (0,0){\includegraphics[width=.9\textwidth]{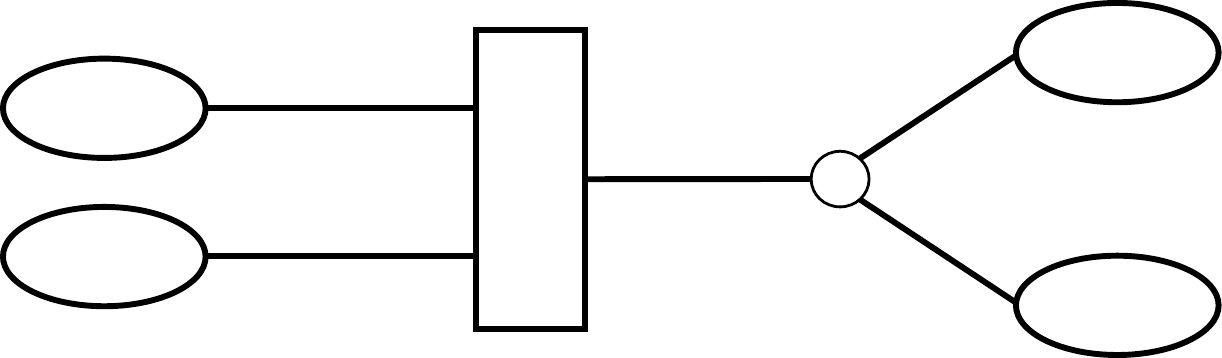}};
         \draw[fill=white, draw=black] (0.5,0) circle (.25);
         \node[] at (0.5,0) {$+$};
         \end{tikzpicture}
         \caption{}
         \label{fig:twistedClasper}
     \end{subfigure}
     \caption{(a) A clasper $c$ showing its decomposition into bands and constituents.  The middle band has a right handed half twist. The set of constituents includes one box, one node, and four leaves. (b) The same clasper, encoded as a union of leaves, nodes, boxes, and framed arcs between them. The ``\(+\)" indicates a positive half twist difference from the blackboard framing.  A ``\(-\)'' will be used for a negative half twist.}
\end{figure}

For each  leaf, $\ell$, we call the component of $\bdry \ell$ which intersects a band the outer boundary.  In the case of an annular leaf, the other boundary component is called the inner boundary.  

Following the convention of \cite{Hab1}, we will often encode a clasper as a union of nodes, boxes, framed closed curves representing its leaves, and framed arcs representing bands, as in Figure~\ref{fig:twistedClasper}.  The leaves will always have the blackboard framing.  When we do not specify a framing on a band we mean the blackboard framing.  When the framing on a band differs from the blackboard framing by a positive (or negative) half twist we will decorate it with a $+$ sign (or $-$ sign).

\begin{figure}
     \centering
     \begin{subfigure}[t]{0.45\textwidth}
     \centering
        \begin{tikzpicture}
           \node[left] at (0,0){\includegraphics[width=.35\textwidth]{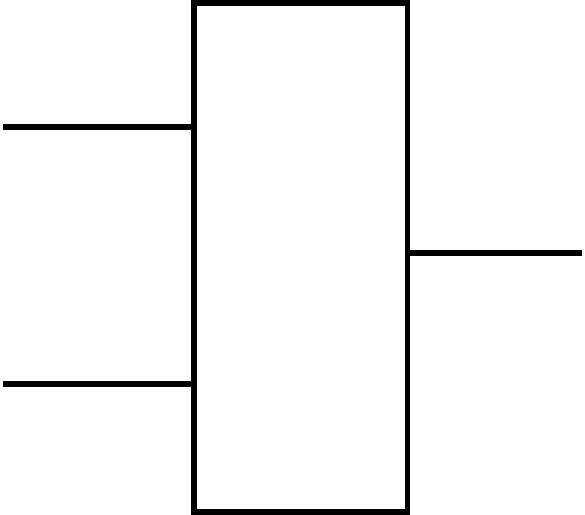}};
           \node[above] at (-2.6,.5) {\tiny{input edge}};
           \node[above] at (-2.6,-.65) {\tiny{input edge}};
           \node[above] at (-.1,0) {\tiny{output edge}};
         \node[right] at (.5,0){\includegraphics[width=.35\textwidth]{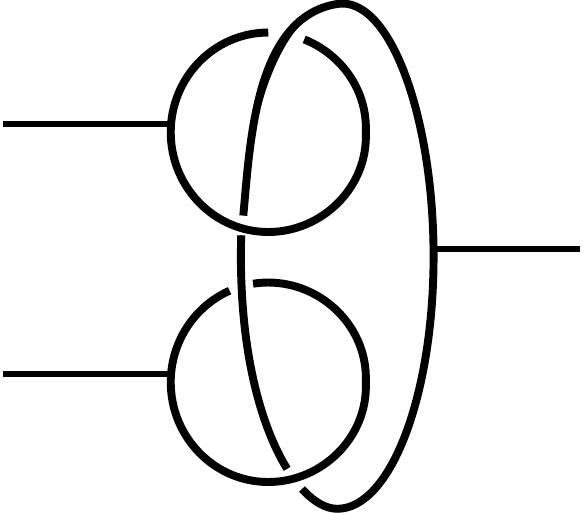}};;
         \end{tikzpicture}
         \caption{Each box is replaced with three leaves tied into a double of the Hopf link.}\label{fig: surgery box}
     \end{subfigure}
     \hfill
     \begin{subfigure}[t]{0.45\textwidth}
     \centering
         \begin{tikzpicture}
         \node[left] at (0,0){\includegraphics[width=.35\textwidth]{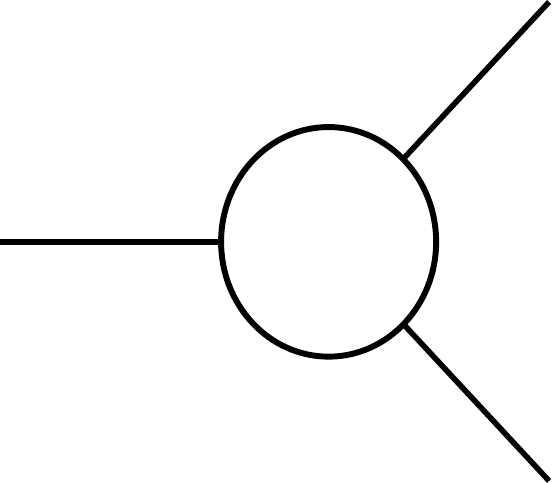}};
         \node[right] at (0,0){\includegraphics[width=.35\textwidth]{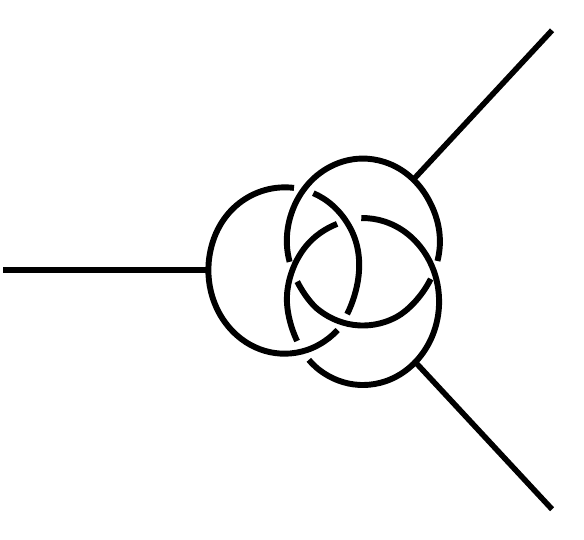}};
         \end{tikzpicture}
         \caption{Each node is replaced with three leaves tied into the Borromean rings.}
         \label{fig: surgery node}
     \end{subfigure}
     \caption{Transforming a clasper into a collection of simple claspers.}
\end{figure}

In order to perform surgery along a clasper we first replace each disk leaf with a collar neighborhood of its boundary, transforming it to an annular leaf, and then perform the move of Figure~\ref{fig: surgery box} at each box and the move of Figure~\ref{fig: surgery node} at each node. Finally we perform surgery along the resulting collection of basic claspers, {or rather, surgery on the canonically associated framed link.}   

If a clasper $c$ is connected, has no boxes, and $H_1(c)$ is carried by its leaves, then we call $c$ \emph{tree-shaped}.  If a tree-shaped clasper has at least one disk-leaf then it is called \emph{admissible}.  For such a clasper, a sequence of handle cancellations reveals that surgery preserves the diffeomorphism type of the 3-manifold $M$.  If every leaf is a disk leaf then $c$ is called \emph{strict}.

If $L$ is a link which intersects the interior of the disk leaves of a strict tree clasper $c$ but is otherwise disjoint from $c$, then $c$ is called a \emph{clasper for $L$} and this surgery changes $L$ by a band sum with a Brunnian link, as is exemplified in Figure~\ref{fig: clasper surgery}.  
A finite collection of disjoint trees is called a \emph{forest}.
We call a forest \emph{admissible} or \emph{strict} if its elements are admissible or strict respectively.  If a $c$ is a strict tree clasper for a link $L$ and $L$ intersects each of 
the disk-leaves of $c$ transversely in a single point then $c$ is \emph{simple}.  If every tree in a forest is simple then that forest is simple. 

The \emph{degree} of a tree-shaped clasper is defined to be the number of nodes in that clasper plus 1 (or equivalently, the number of leaves minus 1.)  The degree of a forest is the minimal degree among all of its trees.

If $L$ is a link, $c$ is a degree $k$ simple forest for $L$, and $L^c$ is the result of changing $L$ by surgery along $c$, then we say that $L$ and $L^c$ are $C_k$-equivalent.  

In \cite{Hab1}, Habiro presents 12 moves which preserve the result of surgery along a clasper.  We recall those moves most relevant to our analysis in Figure~\ref{fig: moves 6 and 12}. 

\begin{figure}
     \begin{subfigure}[t]{0.4\textwidth}
     \centering
         \begin{tikzpicture}
         \node at (0,0){\includegraphics[width=.6\textwidth]{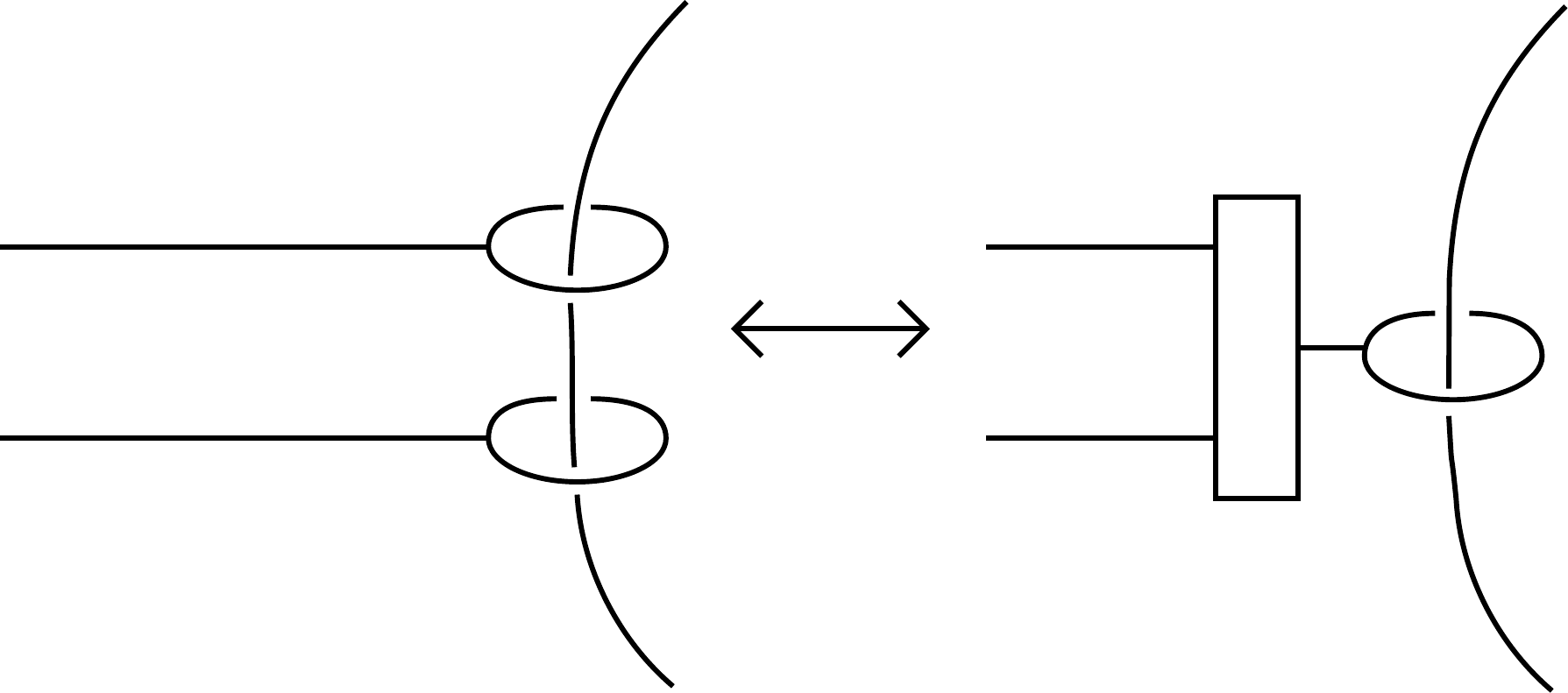}};
         \end{tikzpicture}
         \caption{Habiro's move 6
         }
         \label{subfig: move 6}
     \end{subfigure}
     \hfill
     \begin{subfigure}[t]{0.5\textwidth}
     \centering
         \begin{tikzpicture}
         \node at (0,0){\includegraphics[width=\textwidth]{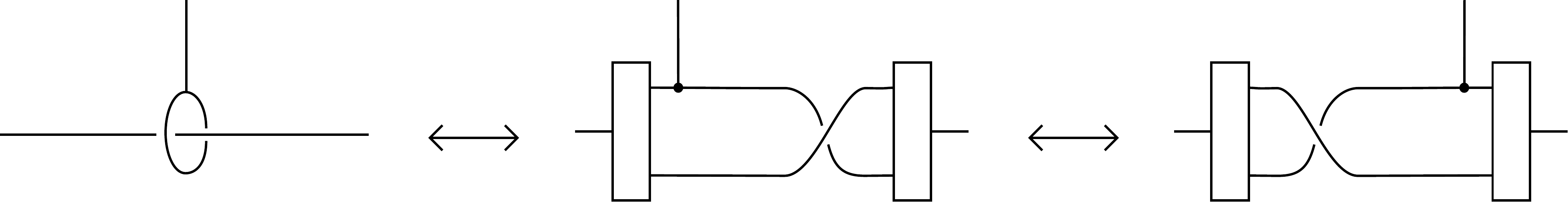}};
         \draw[fill=white, draw=black] (3.25,0.075) circle(.2);
         \node at (3.25,0.075) {$\scriptscriptstyle{-}$};
         \draw[fill=white, draw=black] (-0.175,0.075) circle(.2);
         \node at (-0.175,0.075) {$\scriptscriptstyle{-}$};
         \end{tikzpicture}
         \caption{Habiro's move 12}
         \label{subfig: move 12}
     \end{subfigure}
     \caption{Two moves which preserve the result of clasper surgery.}\label{fig: moves 6 and 12}
     \end{figure}

Habiro also studies the notion of homotopy of claspers in order to measure the difference between $C_k$-equivalence and $C_{k+1}$-equivalence.  We recall the main result here. Items (1) through (3) are used to produce a monoid structure on the set of $C_k$-trivial links up to $C_{k+1}$-equivalence \cite[Theorem 4.3]{Hab1}.  Item (4) is used to upgrade this to a group structure \cite[Theorem 4.7]{Hab1}.  

\begin{theorem}\label{thm: Habiro moves}
Let $L$ be a link and $F$ and $F'$ be disjoint simple forests for $L$.
\begin{enumerate}
\item \cite[Proposition 4.4]{Hab1} If $F$ and $F'$ differ by passing a disk-leaf on a degree $k$ tree of $F$ through a disk leaf on a different tree of degree $k'$,  then $L^F$ and $L^{F'}$ are related by a single simple $C_{k+k'}$ move.
\item \cite[Proposition 4.6]{Hab1} If $F$ and $F'$ differ by passing an edge in a degree $k$ tree through an edge on a degree $k'$ tree then $L^F$ and $L^{F'}$ are related by a single simple $C_{k+k'+1}$ move.
\item \cite[Proposition 4.5]{Hab1} If $F$ and $F'$ differ by passing an edge in a degree $k$ tree through $L$ then $L^F$ and $L^{F'}$ are related by a single simple $C_{k+1}$ move.
\item \cite[Theorem 4.7]{Hab1} If $F$ and $F'$ differ by introducing a (positive or negative) half twist along any two edges on a single tree of degree $k$, then $L^F$ and $L^{F'}$ are $C_{k+1}$-equivalent.
\end{enumerate}
\end{theorem}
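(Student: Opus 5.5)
The plan is to derive all four items from one device, which is also the mechanism of \cite{Hab1}. A local crossing change between two pieces of $F\cup F'\cup L$ is itself surgery along a basic clasper $c_0$ whose two annular leaves encircle the two strands involved. We then absorb $c_0$ into the forest using Habiro's moves until it becomes a single simple tree of the predicted degree. Throughout we use that surgery on disjoint claspers commutes. We also use that any tree of degree $\geq m$ appearing in the process may be discarded up to $C_m$-equivalence, which is the inductive input.

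\textbf{Items (1)--(3).} For item (1), write the crossing change between a disk-leaf $\ell$ of a degree $k$ tree $T$ and a disk-leaf $\ell'$ of a degree $k'$ tree $T'$ as surgery on $c_0$ with one leaf around $\ell$ and one around $\ell'$. A leaf of $c_0$ encircling a disk-leaf of $T$ can be absorbed by the leaf-slide and node-creating moves (the moves of Figure~\ref{fig: moves 6 and 12} together with the box moves of Figure~\ref{fig: surgery box}). This replaces that leaf by a copy of $T$ with $\ell$ deleted. Doing the same at $\ell'$ glues the two trees along their deleted leaves. The result is a tree with $(k+1)+(k'+1)-2$ leaves, hence degree $k+k'$, which is simple for $L$ because every remaining leaf is a leaf of $T$ or $T'$. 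Item (2) is identical, except that $c_0$ now encircles two edges. A leaf of $c_0$ around an edge is absorbed by splitting that edge with a new node, and the node count gives degree $k+k'+1$. Item (3) uses a $c_0$ with one leaf around an edge of $T$ and the other leaf a disk-leaf meeting $L$ once, which gives degree $(k+1)$. In each case the side effects of the moves are extra trees of strictly higher degree, and these are discarded.

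\textbf{Item (4).} Use move 12 (Figure~\ref{subfig: move 12}) to transport a half twist across a node, at the cost of half twists on the other edges at that node and of higher-degree trees. Iterating along the unique path in the tree between the two twisted edges moves both half twists onto a common edge, where they combine into a full twist. A full twist on an edge is equivalent to passing that edge through a parallel copy of itself, that is, a self-crossing of the tree. The same absorption argument as in item (2) then shows this costs only a $C_{k+1}$ move.

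The main obstacle is the bookkeeping in the absorption step. One must check that the leaf-through-leaf and edge-through-edge configurations really produce a \emph{single simple} tree of the stated degree rather than a box-containing clasper. One must also check that every byproduct has degree at least the target, and that in item (4) the sign conventions at nodes make the twists cancel rather than accumulate. I would first verify these on the smallest cases $k=k'=1$ by explicit diagrams and then induct on the number of nodes.
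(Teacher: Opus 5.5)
Your proposal has genuine gaps in items (1) and (4).

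\textbf{Item (1).} Your own count already fails: $(k+1)+(k'+1)-2=k+k'$ leaves means degree $k+k'-1$, not $k+k'$. The tree is also the wrong one. Take $k=k'=1$: two basic claspers whose disk-leaves are adjacent on a strand $s$, with their other leaves on distinct components $A$ and $B$. Your glued tree is a basic clasper between $A$ and $B$, so it changes $\lk(A,B)$ by $\pm1$. But exchanging the two leaves along $s$ changes no linking number.

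The error is in the model. Sliding $\ell$ past $\ell'$ along $L$ is not a crossing change between the two leaves: their boundaries are parallel meridians of $s$ and stay unlinked. It is a passage of the edge incident to $\ell$ through $\ell'$. Let $c_0$ have one leaf around that edge. Absorbing it inserts a node and keeps copies of all $k+1$ leaves of $T$, including one on $s$. The other leaf of $c_0$, which links $\ell'$, is absorbed by a copy of $T'$ with $\ell'$ removed. This gives $(k+1)+k'$ leaves, hence degree $k+k'$. For $k=k'=1$ it is the expected $\Delta$-type tree with leaves on $A$, $s$, $B$.

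Note also that discarding higher-degree byproducts only gives $C_{k+k'}$-equivalence, not the single simple move the statement asserts. The paper does not reprove (1)--(3); it quotes Habiro's Propositions 4.4--4.6 directly.

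\textbf{Item (4).} The key step is false. A crossing change of an edge with itself, with the band carried along, changes its twisting by two full twists; for instance, it turns a positive curl into a negative one. So no self-crossing accounts for a single full twist. ``Passing the edge through a parallel copy of itself'' is just a restatement of the framing change, not a clasper move.

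Even granting that step, item (2) concerns edges of two different trees. Absorbing a self-crossing of one tree would produce a clasper with a cycle, which your argument does not treat.

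The transport step also misfires. By your own description, each push across a node leaves half twists on both other edges. Twists therefore pile up on off-path edges instead of merging into one full twist.

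The paper avoids all local analysis with a formal argument. By \cite[Theorem 4.7]{Hab1}, in the quotient $\widetilde{\mathcal{F}}_k(L_0)$ with $L_0=L^{F\setminus T}$, a tree and the same tree with one extra half twist on one edge are additive inverses. Let $T_1$ be $T$ with a half twist on $e_1$ only. Then $[T]$ and $[T']$ are both inverses of $[T_1]$, hence equal. So $L^F=(L_0)^T$ and $(L_0)^{T'}=L^{F'}$ are $C_{k+1}$-equivalent. This argument also covers the case $e_1=e_2$.
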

\begin{proof}
Parts (1) through (3) are explicitly stated and proved in \cite{Hab1}.   Item (4) implicitly appears in the proof of \cite[Theorem 4.7]{Hab1}, so an assiduous reader is directed there for the proof.  At the risk of circular logic, we instead explain why (4) follows from \cite[Theorem 4.7]{Hab1}.  

\cite[Definition 4.2]{Hab1} gives an equivalence relation called homotopy for claspers for a fixed link $L$.  Let $\mathcal{F}_k(L)$ denote set of all homotopy classes of simple strict forest claspers of degree $k$. Under the operation of disjoint union,  $\mathcal{F}_k(L)$ is a monoid.  The map sending $F\in\mathcal{F}_k(L)$  to the class of $L^F$ up to $C_{k+1}$-equivalence is well defined \cite[Theorem 4.3]{Hab1}.

We will denote by $\widetilde{\mathcal{F}}_k(L)$  the quotient of $\mathcal{F}_k(L)$ given by setting $[T]+[T']=0$ whenever $T$ and $T'$ are simple tree claspers differing by a 
single half twist along one edge.  According to \cite[Theorem 4.7]{Hab1} the map sending $F\in \widetilde{\mathcal{F}}_k(L)$ to $L^F$ up to $C_{k+1}$-equivalence is still well defined.

Now let $F$ and $F'$ differ by adding a half twist along two edges $e_1$ and $e_2$ of a single tree $T\subseteq F$ resulting in $T'\subseteq F'$.  Let $F_1$ be the result of adding a half twist along only $e_1$ on $T$ resulting in $T_1\subseteq F_1$.  
Let $F_0=F\setminus T$  and $L_0=L^{F_0}$.  We can now treat $[T]$, $[T']$ and $[T_1]$ as elements of $\widetilde{\F}_l(L_0)$.    Notice that as $T_1$ differs from $T$ by adding a half twist along $e_1$ and $T'$ differs from $T_1$ by adding a half twist along $e_2$, it follows that both $[T]$ and $[T']$ are additive inverses to $[T_1]$ in $\widetilde{F}_k(L_0)$ and thus $[T]=[T']$.  Thus, up to $C_{k+1}$-equivalence, 
$$L^F = (L_0)^T=(L_0)^{T'}=L^{F'},$$
completing the proof.
\end{proof}

\section{Crossing changes and raising degree.}\label{sect: proof of main theorem}

In this section we prove Theorems~\ref{thm:main lk=0}, \ref{thm:main lk=1}, and \ref{thm:main abitrary linking}.  Theorem~\ref{thm:main lk=0} follows as the special case of Theorem~\ref{thm:main abitrary linking} where all of the linking numbers vanish.

For any link $L$, after $\Lambda'(L) = \Sum_{i<j}||\lk(L_i,L_j)|-1|$ crossing changes, $L$ is reduced to a link whose every linking number is $\pm1$, so an appeal to Theorem \ref{thm:main lk=1} will now complete the proof of Theorem~\ref{thm:main abitrary linking}. Thus, it suffices to provide a proof of Theorem \ref{thm:main lk=1}.


\MainThmLkOne*

\begin{proof}[Proof of Theorem \ref{thm:main lk=1}.]

We proceed inductively on $k$.  Let $L$ be a link with $|\lk(L_i,L_j)|=1$ for all $i,j$.  As shown in \cite{MN89}, any two links with identical pairwise linking numbers are related by a sequence of $\Delta$-moves (Figure~\ref{subfig: Delta move}), which in turn are realized by surgery on degree 2 trees (Figure~\ref{subfig: Delta move clasper}). Thus, if we make the ${n\choose 2}$ crossing changes needed to kill the linking numbers, we arrive at a $C_2$-trivial link. In order to start the induction, we also make the trivial crossing change corresponding to the degree 1 simple clasper of Figure~\ref{subfig: trivial clasper} on each component of $L$.

\begin{figure}
     \centering
     \begin{subfigure}[t]{0.3\textwidth}
         \centering
         \begin{tikzpicture}
         \node at (0,0){\includegraphics[width=\textwidth]{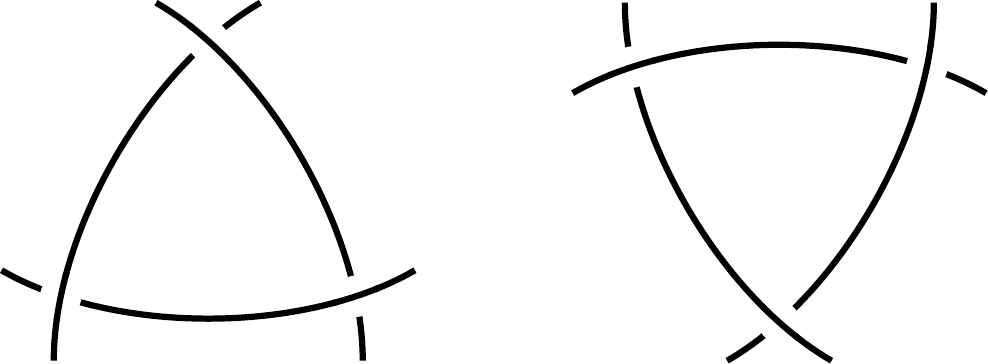}};
         \node at (0,-0.2) {$\longrightarrow$};
         \node at (0,0.2) {\tiny{$\Delta$}};
         \end{tikzpicture}
         \caption{The Delta move}
         \label{subfig: Delta move}
     \end{subfigure}
     \hfill
     \begin{subfigure}[t]{0.3\textwidth}
     \centering
         \begin{tikzpicture}
         \node at (0,0){\includegraphics[width=.5\textwidth]{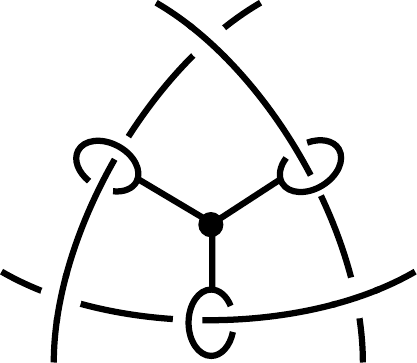}};
         \end{tikzpicture}
         \caption{A degree 2 tree that acts by the Delta move. 
         }
         \label{subfig: Delta move clasper}
     \end{subfigure}
     \hfill
     \begin{subfigure}[t]{0.3\textwidth}
     \centering
         \begin{tikzpicture}
         \node at (0,0){\includegraphics[width=.5\textwidth]{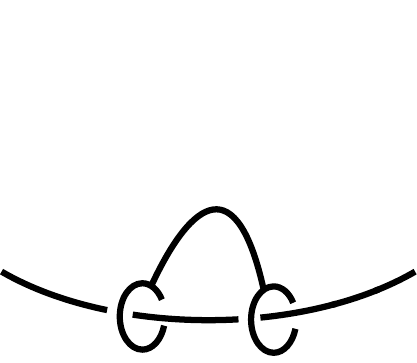}};
         \end{tikzpicture}
         \caption{A simple clasper that changes $L$ by two Reidemeister 1 moves.
         }
         \label{subfig: trivial clasper}
     \end{subfigure}
     \caption{}\label{}
     \end{figure}

Let $L$ be an $n$-component link with all pairwise linking numbers equal to $\pm1$ and $k\ge 2$.  For the sake of induction, we assume  that $L$ can be reduced to a $C_k$-trivial link in one self-crossing change on each component, and one crossing change between each pair of components.  By realizing each of these crossing changes as surgery along a degree 1 simple clasper, we see that there is a simple forest $F$ for $L$ so that $L^F$ is the unlink and for which \(F=\{c_{ij}\mid1\le i\le j\le n\}\cup F^k\) where
\begin{itemize}
\item  Each $c_{ij}$ is a degree 1 clasper whose disk leaves intersect $L_i$ and $L_j$ respectively each in one point
\item $F^k$ is a forest of degree at least $k$.
\end{itemize}

Let $T\in F^k$ be a tree of degree exactly $k$.  The proof will replace $T$ by possibly many new strict trees each of degree greater than $k$ and replace one $c_{ij}$ by a new degree 1 clasper, $c_{ij}'$, while preserving the result of surgery.

 A reasonably straightforward induction on the degree of $T$ shows that there must be a node $N$ of $T$ which shares an edge with each of two different leaves, $\ell_1$ and $\ell_2$.  Let $\ell_1$ intersect $L_i$ and $\ell_2$ intersect $L_j$. It may be that $i=j$. Let $a_1$ and $a_2$ be the paths in $L$ from $\ell_1\cap L_i$ to $c_{ij}\cap L_i$ and from $\ell_2\cap L_j$ to $c_{ij}\cap L_j$, respectively.  (If $i=j$ then we require that $a_1$ and $a_2$ be disjoint.)  As in Figure~\ref{subfig: Main theorem getting leaves close start} $a_1$ and $a_2$ might intersect other leaves in elements of $F_k$ (including other leaves of $T$), and simple claspers $c_{ik}$.

\begin{figure}
     \centering
     \begin{subfigure}[t]{0.3\textwidth}
         \centering
         \begin{tikzpicture}
         \node at (0,0){\includegraphics[width=.75\textwidth]{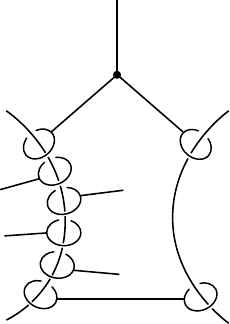}};
         \node at (-1.7,1.2) {$L_i$};
         \node at (1.8,1.2) {$L_j$};
         \node at (0,-2.5) {$c_{ij}$};
         \node at (-2.1,-.4) {$S$};
         \node at (-2.1,-1.1) {$c_{ik}$};
         \node at (0,1.1) {$N$};
         \node at (-0.3,2.5) {$T$};
         \node at (0.35,-0.45) {$T$};
         \node at (0.35,-1.85) {$T$};
         \end{tikzpicture}
         \caption{Leaves of $T$, leaves of simple claspers $c_{ik}$, and other elements  $S\in F_k$ intersecting the  interior of $a_i$.}
         \label{subfig: Main theorem getting leaves close start}
     \end{subfigure}
     \hfill
     \begin{subfigure}[t]{0.3\textwidth}
     \centering
         \begin{tikzpicture}
         \node at (0,0){\includegraphics[width=.75\textwidth]{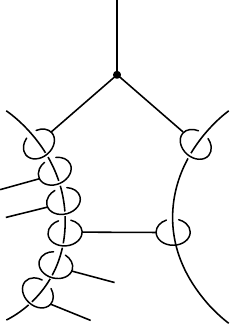}};
         \node at (0,1.1) {$N$};
         \node at (0,-1.4) {$c_{ij}$};
         \node at (-2.1,-.4) {$S$};
         \node at (-2.1,-1.1) {$c_{ik}$};
         \node at (-1.7,1.2) {$L_i$};
         \node at (1.8,1.2) {$L_j$};
         \node at (-0.3,2.5) {$T$};
         \node at (0.2,-2) {$T$};
         \node at (-0.2,-2.6) {$T$};
         \end{tikzpicture}
         \caption{Sliding leaves of $T$ past $c_{ij}$.}
         \label{subfig: Main theorem getting leaves close step 1}
     \end{subfigure}
     \hfill
     \begin{subfigure}[t]{0.3\textwidth}
     \centering
         \begin{tikzpicture}
         \node at (0,0){\includegraphics[width=.83\textwidth]{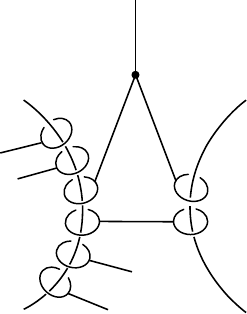}};
         \node at (-.1,1.4) {$N$};
         \node at (0.2,-1.35) {$c_{ij}$};
         \node at (-2.2,.2) {$S$};
         \node at (-2,-.6) {$c_{ik}$};
         \node at (-1.6,1.25) {$L_i$};
         \node at (2,1.25) {$L_j$};
         \node at (-0.1,2.5) {$T$};
         \node at (0.4,-2) {$T$};
         \node at (0,-2.6) {$T$};
         \end{tikzpicture}
         \caption{Sliding all other leaves past $\ell_1$.}
         \label{subfig: Main theorem getting leaves close done}
     \end{subfigure}
     \caption{}\label{fig: Main theorem getting leaves close}
     \end{figure}

According to Theorem~\ref{thm: Habiro moves} (1), at the cost of adding to $F$ trees of degree at least $k+1$, we can slide a degree $k$ leaf through a leaf of any other tree. Therefore, we may arrange that there are no leaves of $T$ in the interior of $a_1$ (or $a_2$) by sliding them past $c_{ij}$ and any other leaves of other trees between as in Figure~\ref{subfig: Main theorem getting leaves close step 1}.  Next, at the cost of adding trees of degree at least $k+1$, we can remove all leaves of other trees (either $S\in F_k$ where $S\neq T$ or simple claspers $c_{ik}$) in the interior of $a_1$ (or $a_2$) by sliding them past $\ell_1$ (respectively $\ell_2$), which is a leaf of the degree $k$ tree $T$, as in Figure~\ref{subfig: Main theorem getting leaves close done}.  We have now arranged that the arcs in $L$ from $\ell_1\cap L$ and $\ell_2\cap L$ to $c_{ij}\cap L$ contain no other points in $T\cap L$.
 
\begin{figure}
     \centering
     \begin{subfigure}[t]{0.22\textwidth}
         \centering
         \begin{tikzpicture}
         \node at (0,0){\includegraphics[width=.9\textwidth]{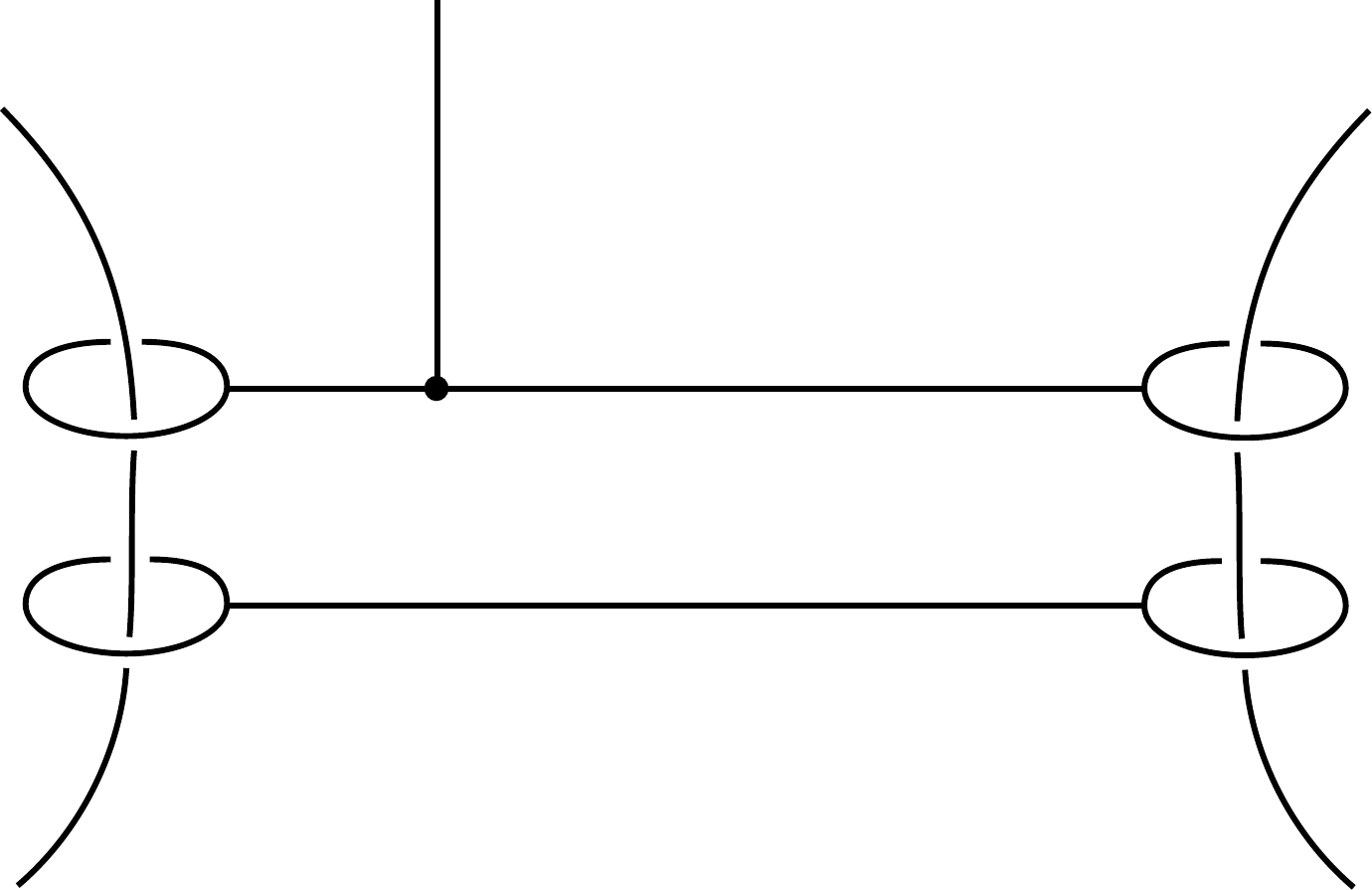}};
         \node at (0,-0.7) {$c_{ij}$};
         \node at (-1.75,0.15) {$\ell_1$};
         \node at (1.8,0.15) {$\ell_2$};
         \node at (-0.6,-0.1) {$N$};
         \end{tikzpicture}
         \caption{The arc in $T$ from $\ell_1$ to $N$ to $\ell_2$ is parallel to the edge of $c_{ij}$}
         \label{subfig: paths parallel}
     \end{subfigure}
     \hfill
     \begin{subfigure}[t]{0.22\textwidth}
     \centering
         \begin{tikzpicture}
         \node at (0,0){\includegraphics[width=.9\textwidth]{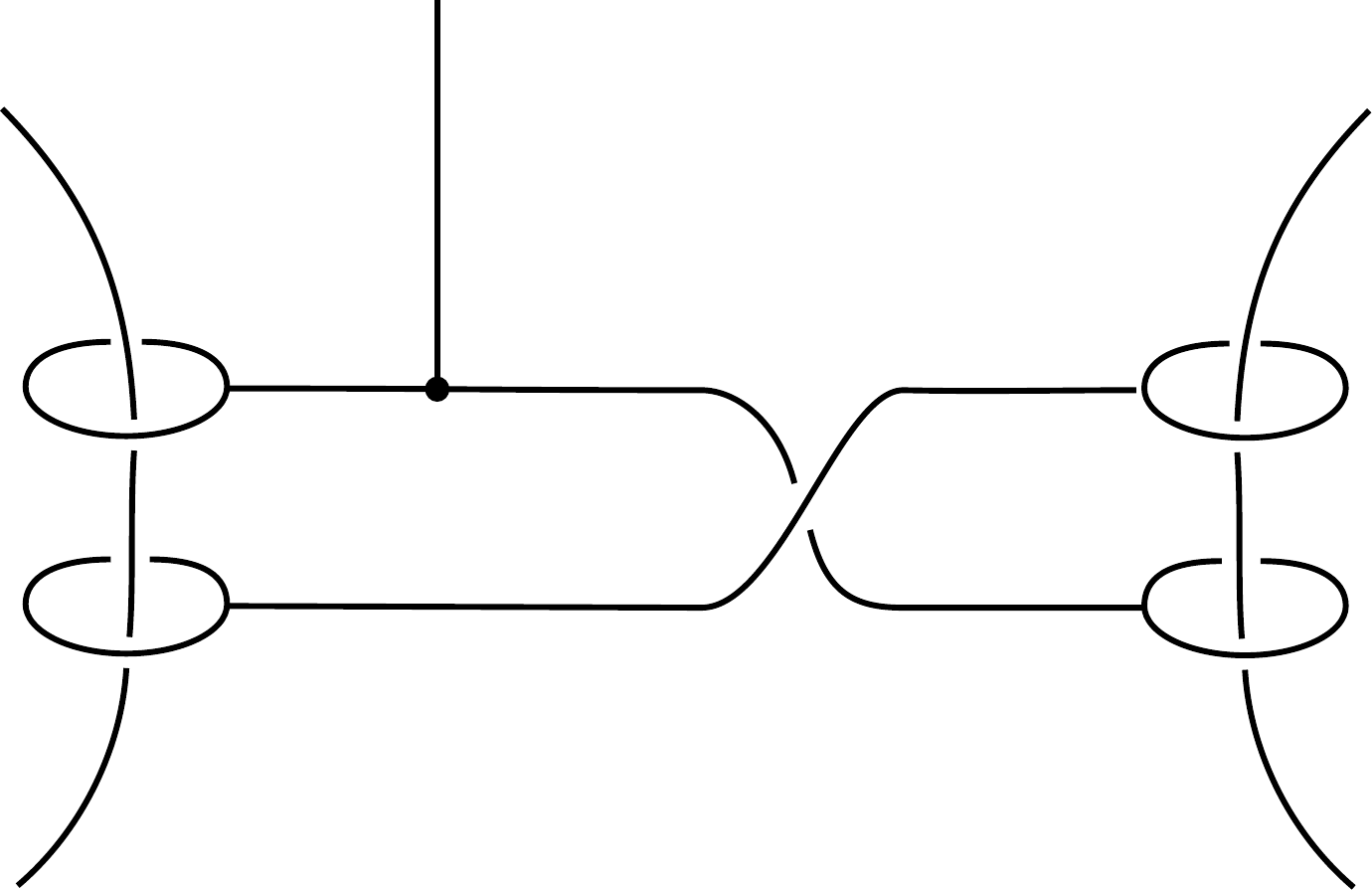}};
         \node at (-0.2,0.15) {
         ${\scriptscriptstyle \overline{s}}$};
         \draw[fill=white, draw=black] (-0.2,0.15) circle(.2);
         \node at (-0.2,.15) {$\scriptscriptstyle{-}$};
         \draw[fill=white, draw=black] (-0.6,.65) circle(.2);
         \node at (-0.6,.65) {$\scriptscriptstyle{-}$};
         \end{tikzpicture}
         \caption{Sliding $\ell_2$ past the leaf of $c_{ij}$ and adding a negative half twist to the arc in T from $N$ to $\ell_2$ and one more elsewhere in $T$.}
         \label{subfig: twisted paths}
     \end{subfigure}
     \hfill
     \begin{subfigure}[t]{0.22\textwidth}
     \centering
         \begin{tikzpicture}
         \node at (0,0){\includegraphics[width=.9\textwidth]{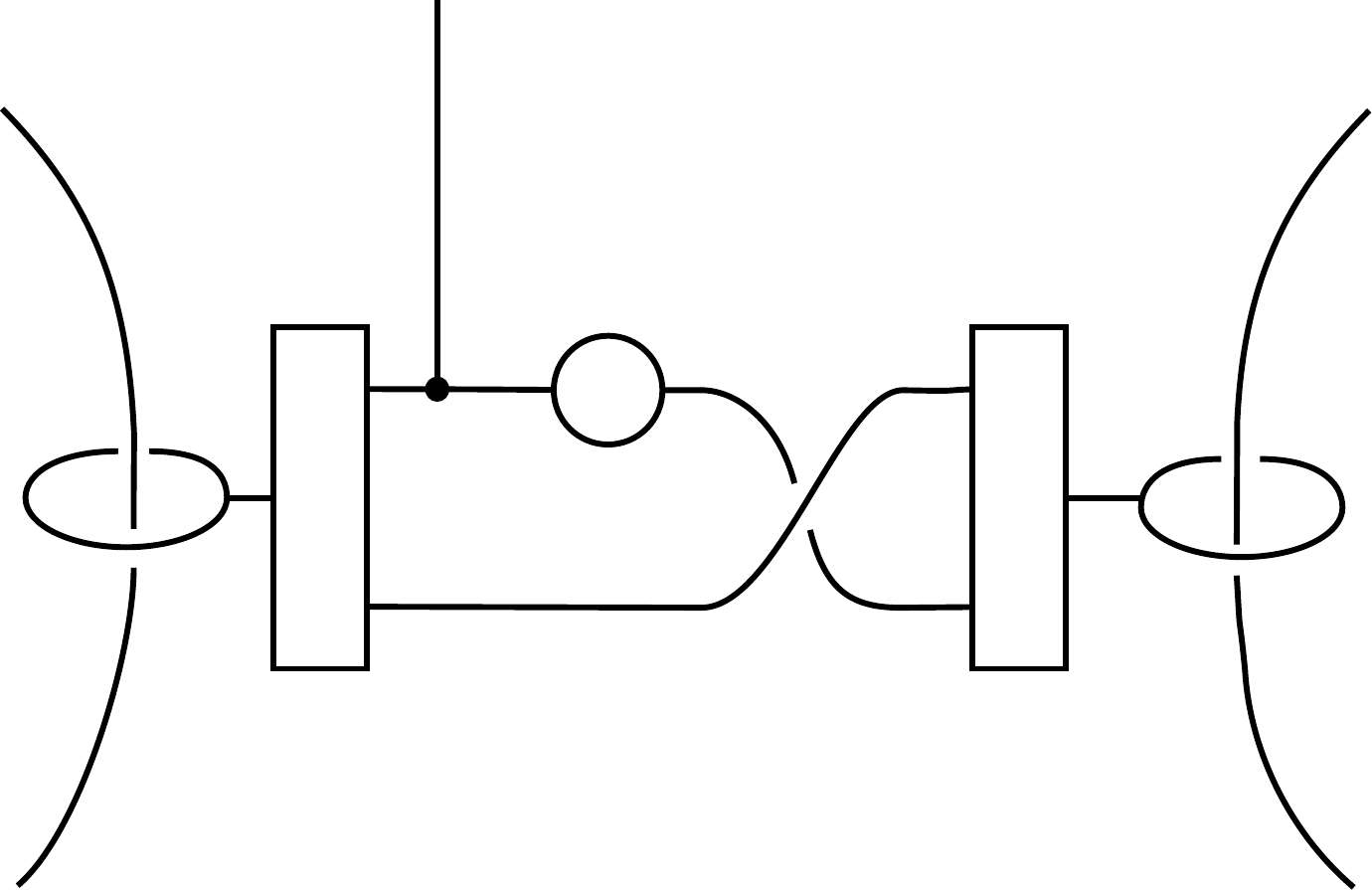}};
         \node at (-0.2,0.15) {
         $\scriptstyle{-}$};
         \draw[fill=white, draw=black] (-0.2,0.15) circle(.2);
         \node at (-0.2,.15) {$\scriptscriptstyle{-}$};
         \draw[fill=white, draw=black] (-0.6,.65) circle(.2);
         \node at (-0.6,.65) {$\scriptscriptstyle{-}$};
         \end{tikzpicture}
         \caption{Using Habiro's move 6 twice}
         \label{subfig: move 6 twice}
     \end{subfigure}
     \hfill
     \begin{subfigure}[t]{0.22\textwidth}
     \centering
         \begin{tikzpicture}
         \node at (0,0){\includegraphics[width=.9\textwidth]{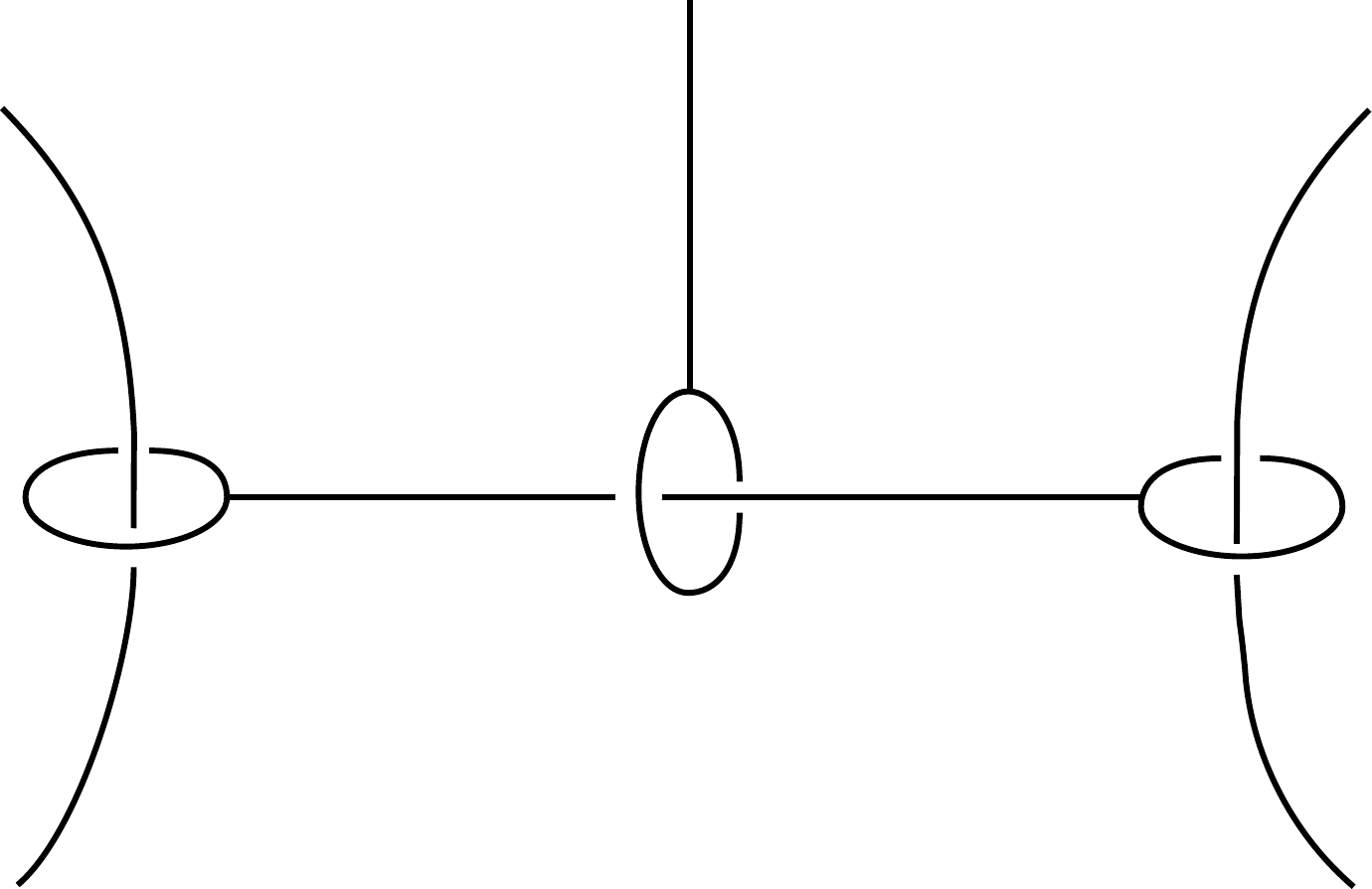}};
         \node at (-0.4,0.75) {$T'$};
         \node at (0.6,-0.5) {$c_{ij}$};
         \draw[fill=white, draw=black] (0,.65) circle(.2);
         \node at (0,.65) {$\scriptscriptstyle{-}$};
         \end{tikzpicture}
         \caption{Habiro's move 12}
         \label{subfig: move 12 applied}
     \end{subfigure}
     \caption{
     }\label{fig: Setup for Move 12}
     \end{figure}

We now introduce two new arcs in $F$.  The first is given by $e_{ij}$, the core of the band in $c_{ij}$.  The second, $\gamma$, is the concatenation of the paths in $T$ from $\ell_1$ to $N$ and from $N$ to $\ell_2$.  According to Theorem~\ref{thm: Habiro moves} (2) and (3), at the cost of adding to $F$ more trees of degree at least $k+1$  we may pass $e_{ij}$ through bands in $F$ 
and $\gamma$ through any simple clasper $c_{k,\ell}$ and any component of $L$.  By doing so we may arrange that $e_{ij}$ and $\gamma$ are parallel, as in Figure~\ref{subfig: paths parallel}.  If needed, we apply Theorem~\ref{thm: Habiro moves} (4) to introduce the needed half twist along with a half twist on some other edge of $T$  and slide $\ell_2$ past the leaf of $c_{ij}$ (at the cost of adding to $F$ a higher degree tree), as in Figure~\ref{subfig: twisted paths}.

We then apply Habiro's moves 6 and 12 (Figures~\ref{subfig: move 6 twice} and \ref{subfig: move 12 applied}, \cite[Proposition 2.7]{Hab1}) which shows that the result of surgery $L^F$ is preserved by replacing $T\cup c_{ij}$ by $T'\cup c_{ij}$, where one of the disk leaves of $T'$ intersects only $e_{ij}$.  But then performing surgery along $T'$ changes $L\cup e_{ij}$ by tying in a Brunnian link.  Up to isotopy, this preserves $L$ and changes $e_{ij}$ to a new degree 1 simple clasper $e_{ij}'$.  Thus, we may replace $F$ by $F\setminus \{T,e_{ij}\}\cup \{e_{ij'}\}$.  If we repeat this argument on every degree $k$ tree in $F$, then we now see a new simple forest $F'$ for $L$ consisting of the same number of degree 1 claspers, no trees of degree less than or equal to $k$, and an uncontrolled number of trees of degree greater than $k$, and so that $L^{F'}=L^F$, which by assumption was the unlink.  This completes the inductive step of the proof.
\end{proof}


\section{The Sato-Levine invariant and a lower bound on $u_k(L)$}\label{sect: lower bound}


In this section we produce some lower bounds on $u_k(L)$, the minimal number of crossing changes needed to transform \(L\) to a \(C_k\) trivial link.  We first produce explicit lower bounds on $u_3(L)$ and $u_4(L)$.  Then, as \(C_{k+1}\) triviality implies \( C_{k}\) triviality,  $u_k(L)$ is non-decreasing in $k$, so this will produce lower bounds on $u_k(L)$ for all $k\ge 4$.  
  
Theorem~\ref{thm:main lk=1} concludes that for any 2-component link $L$, $u_4(L)\le 4$.  We produce a 2-component link with vanishing linking number for which $u_4(L)\ge 3$.  
The link of Figure~\ref{fig: 2-component example} satisfies all of the requirements of the following result:


\begin{figure}
     \centering\begin{subfigure}[t]{0.3\textwidth}
         \centering
         \begin{tikzpicture}
         \node at (0,0){\includegraphics[width=.9\textwidth]{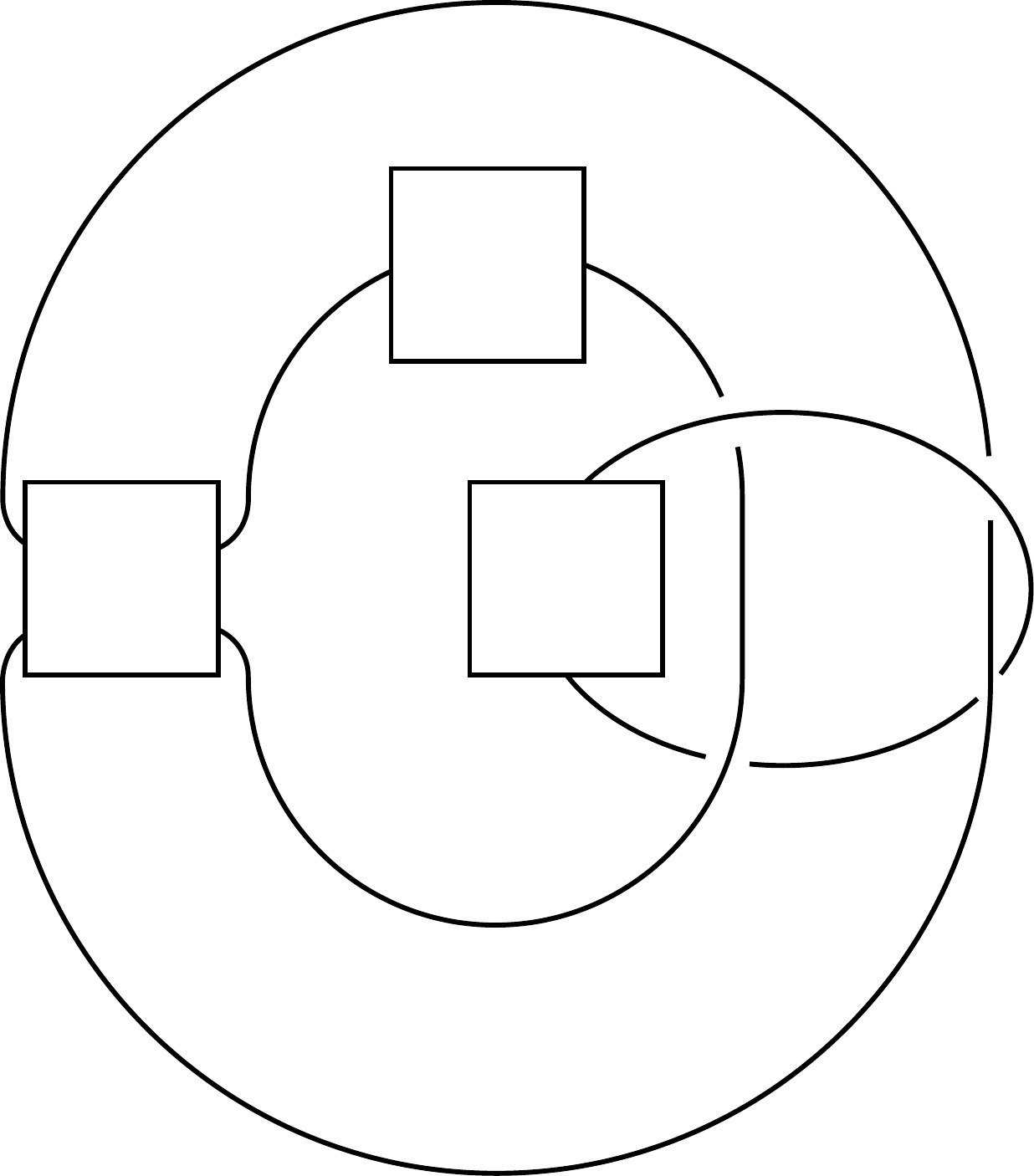}};
         \node at (-1.7,0) {$+6$};
         \node at (.2,0.1) {$T$};
         \node at (-.1,1.5) {$T$};
         \node at (0,-4.2) {};
         \end{tikzpicture}
         \caption{}\label{fig: 2-component example}
         \end{subfigure}
         \begin{subfigure}[t]{0.6\textwidth}
         \centering
         \begin{tikzpicture}
         \node at (0,0){\includegraphics[width=.9\textwidth]{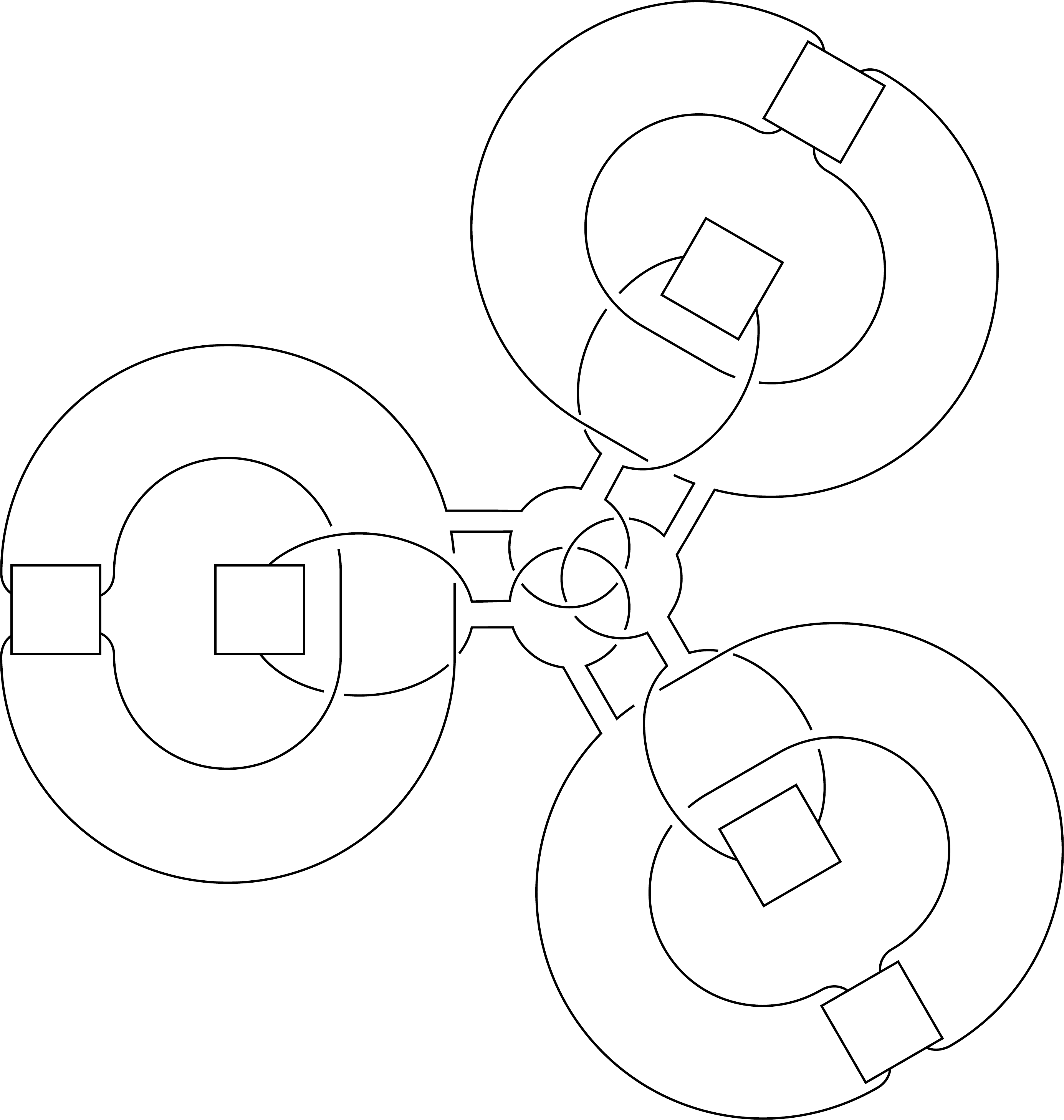}};
         
         \node at (-4,-.4) {$+6$};
         \node at (-2.3,-.4) {$T$};
         
         \node[rotate=-120+90] at (2.5,3.8) {$+6$};
         \node[rotate=-120+90] at (1.7,2.4) {$T$};
         
         \node[rotate=120-90] at (2.9,-3.9) {$+6$};
         \node[rotate=120-90] at (2.1,-2.4) {$T$};
         \end{tikzpicture}
         \caption{}\label{fig: 3-component example}
     \end{subfigure}
     
     \caption{Left: A 2-component link $L$ with $\mu_{1122}(L)=6$ and whose every component is a trefoil.  The ``+6" in the box indicates 6 full right handed twists and the $T$ indicates tying in a trefoil knot.  Right: a 3-component link whose every 2-component link is the link of (a) and with $\mu_{123}(L)=\pm1$}
     \end{figure}

\begin{theorem}\label{thm: c2>=3}
Let $L=L_1\cup L_2$ be a 2-component link with $\lk(L_1,L_2)=0$, where neither $L_1$ nor $L_2$ is $C_4$-trivial, $\mu_{1122}(L)>0$, and $\mu_{1122}(L)\equiv 6 \mod 8$.  Then $u_4(L)\ge 3$.  
\end{theorem}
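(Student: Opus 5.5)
We argue by contradiction: suppose at most two crossing changes turn $L$ into a $C_4$-trivial link $L'$, and derive a contradiction from the components and the invariant $\mu_{1122}$ (the Sato--Levine invariant). All invariants of length at most $4$, including $\mu_{1122}$ and the finite type invariants of degree $\le 3$ of each component, are $C_4$-invariants. Hence $L'$ has $\mu_{1122}(L')=0$ and each component of $L'$ is $C_4$-trivial.

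First, the components force self-crossing changes. Each $L_i$ is not $C_4$-trivial, while each $L'_i$ is. So each component must undergo at least one self-crossing change, since crossing changes between different components do not alter the knot type of either component. With only two changes available, the sequence must be exactly one self-crossing change on $L_1$ and one on $L_2$, and no crossing changes between components. In particular the linking number stays $0$ throughout, so $\mu_{1122}$ is well defined at every stage.

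The main step is to control how a self-crossing change on $L_1$ affects $\mu_{1122}$. A self-crossing change on $L_1$ is a band sum of $L_1$ with a Hopf-link-type clasp. Equivalently, smoothing the crossing splits $L_1$ into two loops $K_a,K_b$ with $\lk(K_a,K_b)=\lambda$ and $\lk(K_a,L_2)=m=-\lk(K_b,L_2)$. I would use the known formula, via the Conway polynomial: with $\lk=0$ we have $\mu_{1122}=-a_3(L)$ up to sign convention, computable as $a_3(L)-a_1(L)(a_2(L_1)+a_2(L_2))$ (Cochran / Hoste). The skein relation $\nabla_{L_+}-\nabla_{L_-}=z\nabla_{L_0}$ then gives a change in $a_3$ equal to the $a_2$ coefficient of the 3-component link $K_a\cup K_b\cup L_2$. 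That coefficient is a quadratic expression in the pairwise linking numbers, $\lambda m+m(-m)+\lambda(-m)$-type terms, which sums to $-m^2$ up to sign. The Arf/$a_2$ correction terms similarly contribute terms involving $\lambda$.

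The expected outcome is that a self-crossing change on one component alters $\mu_{1122}$ by $\pm m^2$ (a perfect square, of a definite sign). Two such changes then change $\mu_{1122}$ by $\pm m_1^2\pm m_2^2$ with the signs determined by the crossing signs. To reach $0$ from a positive value $\mu\equiv 6 \pmod 8$, we would need $\mu=m_1^2+m_2^2$ or a difference of squares.

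Now the number theory finishes the argument. A sum of two squares is $\equiv 0,1,2,4,5 \pmod 8$, never $6$. A difference of two squares is never $\equiv 2 \pmod 4$, so it is never $\equiv 6 \pmod 8$. The case where both contributions are negative cannot cancel a positive $\mu$. This is exactly the mod $8$ condition together with positivity, giving the contradiction.

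The main obstacle is the precise change formula for $\mu_{1122}$ under a self-crossing change, including its sign and the verification that no extra terms in $\lambda$ survive. I would first try to derive it cleanly from the Conway polynomial skein relation. Alternatively, I could use the Sato--Levine description as the self-linking of the intersection curve of Seifert surfaces: there, a self-crossing change on $L_1$ adds a tube whose effect is $\pm m^2$, where $m$ is the intersection number of the loop with the surface of $L_2$.
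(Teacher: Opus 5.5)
Your proposal is correct and follows the paper's proof: force exactly one self-crossing change on each component, use that each such change shifts $\mu_{1122}$ by $\pm m^2$, and then rule out $6 \bmod 8$ for $k^2\pm\ell^2$, with positivity excluding $-k^2-\ell^2$. The only difference is that the paper cites the square formula (Jin's thesis, Theorem 1.3; Li, Theorem 4.1), whereas your skein computation already proves it: Hoste's formula gives the $z^2$-coefficient of $K_a\cup K_b\cup L_2$ as $\lambda m - m^2 - \lambda m = -m^2$, and no correction term arises because $a_1=0$ at every stage.
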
  
\begin{proof}

It suffices to show that no pair of crossing changes can reduce $L$ to a $C_4$-trivial link.  First note that by assumption, neither $L_1$ nor $L_2$ is $C_4$-trivial. Thus, if such a pair exists then it must consist of one self-crossing change on $L_1$ and one self-crossing change on $L_2$.  

  By \cite[Theorem 1.3]{JinThesis} (See also \cite[Theorem 4.1]{Li11}) each of these crossing changes alters $\mu_{1122}$ by $\pm k^2$ for some $k\in \Z$.  So if $L'$, the result of these two crossing changes, is $C_3$-trivial, then $\mu_{1122}(L')=0\) and so $\mu_{1122}(L)=\pm k^2\pm\ell^2$ for some $k,\ell\in \Z$.  Since $\mu_{1122}(L)>0$, we conclude that $\mu_{1122}(L)=+ k^2\pm\ell^2$

We now work over $\Z/8$, By direct inspection, the only perfect squares over $\Z/8$ are $0$, $1$, and $4$.  By further computation, $k^2\pm \ell^2$ never takes on the value of $6$ in $\Z/8$.  
\end{proof}

For 3-component links we add to our analysis the triple linking number, $\mu_{123}(L)$.  To see an example to which this theorem applies, see Figure~\ref{fig: 3-component example}.

\begin{theorem}\label{thm: c3>=6}
Let $L$ be a 3-component link with vanishing pairwise linking numbers, whose every component is not $C_3$-trivial, and which has $\mu_{123}(L)\neq 0$.  Then $u_3(L)\ge 5$.  If additionally, $\mu_{1122}(L_i\cup L_j)=6$ for all $i,j$  then $u_4(L)\ge 6$.  
\end{theorem}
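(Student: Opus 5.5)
The plan is to count, for any sequence of crossing changes taking $L$ to a $C_3$-trivial (respectively $C_4$-trivial) link $L'$, how many self-crossing changes and how many inter-component crossing changes are forced.

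\emph{Self-crossing changes.} A crossing change between two components does not change the knot type of either component, and self-crossing changes on $L_j$ do not affect $L_i$ for $i\neq j$. Since $L'$ is $C_3$-trivial, each component $L'_i$ is a $C_3$-trivial knot. Because no $L_i$ is $C_3$-trivial, each component needs at least one self-crossing change, so there are at least $3$ such changes.

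\emph{Inter-component crossing changes.} Since $\mu_{123}$ is an invariant of $C_3$-equivalence, $\mu_{123}(L')=0$. Self-crossing changes are link homotopies, so they preserve $\mu_{123}$; this is Milnor's theorem, and it applies because the linking numbers vanish and stay well defined. Hence some crossing change between distinct components must occur. Linking numbers start at zero and must end at zero, and a crossing change between $L_i$ and $L_j$ alters $\lk(L_i,L_j)$ by $\pm1$. So the crossing changes between any fixed pair $i,j$ come in even number. At least one pair is used, giving at least $2$ inter-component changes, and therefore $u_3(L)\ge 3+2=5$.

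For the $C_4$ statement, $C_4$-triviality implies $C_3$-triviality, so the count above still applies, and I need to exclude exactly $5$ crossing changes. With exactly $5$, there is one self-crossing change per component and exactly two changes between a single pair, say $L_1,L_2$. Then the sublink $L_1\cup L_3$ (and likewise $L_2\cup L_3$) receives only one self-crossing change on $L_1$ and one on $L_3$; its other crossing changes either involve $L_2$ or are self-changes of $L_2$, and disappear when $L_2$ is deleted. The sublink $L'_1\cup L'_3$ of a $C_4$-trivial link is $C_4$-trivial. Moreover $L_1\cup L_3$ satisfies the hypotheses of Theorem~\ref{thm: c2>=3}: its linking number is zero, $\mu_{1133}=6$, and $L_1,L_3$ are not $C_4$-trivial, which follows since they are not even $C_3$-trivial. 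Theorem~\ref{thm: c2>=3} shows that two self-crossing changes cannot make it $C_4$-trivial, which is a contradiction. So at least $6$ crossing changes are needed.

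The main obstacle is the sublink-restriction step. I must make sure that deleting $L_2$ turns the given sequence into a sequence on $L_1\cup L_3$ consisting only of the self-crossing changes on $L_1$ and $L_3$. I also have to use the $C_4$-triviality of $L'_1\cup L'_3$, which holds because clasper surgery on the unlink restricts to sublinks: leaves meeting deleted components can be removed, giving trees that act trivially. The proof of Theorem~\ref{thm: c2>=3} only uses that the two changes are one self-change on each component, so it applies verbatim.
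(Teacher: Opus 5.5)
Your proposal is correct and follows the paper's proof essentially step for step. You force one self-crossing change per component, use invariance of $\mu_{123}$ under self-crossing changes plus linking-number parity to force two changes between a single pair, and then apply Theorem~\ref{thm: c2>=3} to the sublinks $L_1\cup L_3$ and $L_2\cup L_3$. Your version states explicitly a few steps the paper leaves implicit: that exactly five changes force both inter-component changes onto one pair, and that the sequence restricts to the sublink and $C_4$-triviality passes to sublinks.
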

\begin{proof}
Consider any sequence of crossing changes transforming $L$ to a $C_3$-trivial link.  Since each component is not $C_3$-trivial, this sequence must include a self-crossing change on $L_1$, a self-crossing change on $L_2$ and a self-crossing change on $L_3$.    Since $\mu_{123}(L)\neq 0$, and $\mu_{123}$ is preserved by self-crossing changes \cite{M1}, this sequence must also contain a crossing change between two components say $L_1$ and $L_2$.  The first crossing change between components $L_1$ and $L_2$ will result in $\lk(L_1,L_2)= \pm1$ so that at least one more crossing change between $L_1$ and $L_2$ is needed to get a $C_3$-trivial link.  

Now add the assumption that $\mu_{1122}(L_i\cup L_j)=6$ for all $i,j$.  So far we have seen that any sequence of crossing changes transforming $L$ to a $C_4$-trivial link must contain at least one self-crossing change on each component and two crossing changes between different components, say $L_1$ and $L_2$.  We have changed $L_1\cup L_3$ and $L_2\cup L_3$ each by only two self-crossing changes, which as we observed in Theorem~\ref{thm: c2>=3} means that these two sublinks cannot be $C_4$-trivial.  Thus, at least one more crossing change is needed.  
\end{proof}

We now give examples that prove that there are $n$-components links for which $u_4(L)$ grows quadratically with $n$. 

\begin{theorem}\label{thm cn lower bound} For any $n\ge 4$ there is an $n$-component link $L$ with $u_4(L)\ge 2\ceil{\frac{n(n-2)}{3}}+n$.  
\end{theorem}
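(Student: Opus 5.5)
We take $L$ to be an $n$-component link, built by generalizing Figure~\ref{fig: 3-component example}. Every 2-component sublink $L_i\cup L_j$ is a copy of the link of Figure~\ref{fig: 2-component example}, so it has linking number zero, trefoil components and $\mu_{iijj}=6$. In addition, every 3-component sublink has $\mu_{ijk}(L)\neq 0$. Concretely, we start from an $n$-component link in which every triple has triple linking number $\pm1$, for instance a connected band sum of Borromean-type claspers, one degree 2 tree for each triple $\{i,j,k\}$, all with pairwise disjoint bands. We then tie a trefoil into each component. Finally, for each pair $i,j$ we band in the twisted clasper pattern of Figure~\ref{fig: 2-component example}, adjusting the twists so that $\mu_{iijj}(L_i\cup L_j)\equiv 6 \bmod 8$, since the degree 2 trees may contribute to $\mu_{iijj}$. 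Each of these modifications is local, so we can arrange all of them simultaneously. Note that each component is a trefoil, which is not $C_4$-trivial (indeed not $C_3$-trivial, since the Arf invariant and $a_2$ are nonzero).

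Now fix a sequence of crossing changes taking $L$ to a $C_4$-trivial link. Form the graph $G$ on vertices $\{1,\dots,n\}$ with an edge $ij$ whenever the sequence contains a crossing change between $L_i$ and $L_j$. We count crossing changes in three groups.

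\begin{enumerate}
\item \emph{Self-crossings.} Each component is nontrivial, so each needs at least one self-crossing change. This gives at least $n$ crossing changes.
\item \emph{Edges of $G$.} Linking numbers start at $0$ and must end at $0$, so every edge of $G$ accounts for at least $2$ crossing changes.
\item \emph{Non-edges of $G$.} Suppose $ij\notin G$ and component $L_i$ received only one self-crossing change (similarly for $j$). Then $L_i\cup L_j$ was altered only by self-crossings, and we can apply the argument of Theorem~\ref{thm: c2>=3}. If each of $L_i$ and $L_j$ received only one self-crossing, that argument shows $\mu_{iijj}$ cannot be killed. So for each non-edge, at least one endpoint must receive an extra self-crossing. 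More carefully, the total change in $\mu_{iijj}$ is a sum of $\pm$ squares, one for each self-crossing on $L_i$ or $L_j$. Three squares can represent $6$ mod $8$ (for example $1+1+4$), so one extra self-crossing at a single endpoint suffices for that pair.
\end{enumerate}

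\emph{Triangle constraint.} Since $\mu_{ijk}$ is invariant under self-crossings, for every triple $\{i,j,k\}$ at least one of the three pairs must be an edge of $G$. In other words, the complement $\overline{G}$ is triangle-free.

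\emph{Optimization.} Let $S$ be the set of vertices receiving at least two self-crossings. By the non-edge constraint, every edge of $\overline{G}$ meets $S$. The total count is then at least
$$n+|S|+2|E(G)| = n+|S|+2\Bigl(\tbinom n2-|E(\overline G)|\Bigr).$$
So it suffices to show that
$$2\tbinom n2 - 2|E(\overline G)| + |S| \;\ge\; 2\ceil{\tfrac{n(n-2)}{3}},$$
whenever $\overline G$ is triangle-free and $S$ is a vertex cover of $\overline G$.

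The edges of $\overline G$ are covered by stars at the vertices of $S$, and triangle-freeness forces each neighborhood to be independent. Hence
$$|E(\overline G)|\le \sum_{s\in S}\deg_{\overline G}(s).$$
Combined with a Mantel/Turán-type estimate, $\overline G$ triangle-free with vertex cover $S$ has at most $|S|(n-|S|)$ edges, namely the bipartite graph between $S$ and its complement. Minimizing
$$2\tbinom n2 - 2s(n-s) + s$$
over $s$ gives roughly $s\approx n/2$ from this crude bound. That only yields about $n^2/2$, so we need a sharper interplay: a vertex in $S$ receives extra self-crossings, yet pays only once for arbitrarily many non-edges.

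This optimization is the main obstacle. The first thing to try is a refined per-pair accounting. For each non-edge $ij$, the squares contributed by self-crossings on $L_i$ and on $L_j$ must sum to $\mu_{iijj}=6 \bmod 8$. A single vertex carrying two squares $a^2,b^2$ cannot simultaneously fix all of its non-neighbors $j$ that carry one square each. It is enough to check whether $a^2\pm b^2\pm c_j^2\equiv 6$ is forced to fail when both $a,b$ are odd versus even. If that fails, we choose the twist parameters differently, perhaps $\mu_{iijj}\equiv 7$ mod $8$, which requires four squares, so that each non-edge forces extra self-crossings at both endpoints. With that choice each vertex incident to $\overline G$ pays at least $1$ extra (so the crude bound above already uses this), and I then expect the extremal configuration to be $\overline G$ a balanced complete tripartite complement, giving the $\tfrac{2n(n-2)}3$ term. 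I will fit the construction to whatever square-sum obstruction makes this counting yield exactly $2\ceil{\tfrac{n(n-2)}{3}}$.
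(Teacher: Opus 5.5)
Your proposal has a genuine gap. It is not only that you leave the optimization unfinished: the obstructions you use cannot yield the stated bound at all.

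The only constraint you extract on the graph $G$ of inter-component crossing changes is that $\overline G$ is triangle-free, from $\mu_{ijk}\neq 0$. The knot-type and $\mu_{iijj}$ constraints only cost extra self-crossings per vertex, hence $O(n)$ in total. By Mantel's theorem $\overline G$ may have $\lfloor n^2/4\rfloor$ edges, so your accounting can never certify more than about $n^2/2+O(n)$. Concretely:
\begin{itemize}
\item Take $G=K_{\lfloor n/2\rfloor}\sqcup K_{\lceil n/2\rceil}$. Every triple has two vertices in one clique, so every triple has an edge of $G$.
\item Let $S$ be one clique, which covers $\overline G=K_{\lfloor n/2\rfloor,\lceil n/2\rceil}$.
\item This satisfies all your constraints at cost $n+\lfloor n/2\rfloor+2\bigl(\binom n2-\lfloor n^2/4\rfloor\bigr)$.
\item For $n=6$ this is $21<22=2\ceil{\frac{6\cdot 4}{3}}+6$, and the deficit grows like $n^2/6$.
\end{itemize}

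Your proposed rescue also fails, because signs matter. For example $7=4^2-3^2$, and every integer is $\pm a^2\pm b^2\pm c^2$. So an argument based only on $\mu_{iijj}$ changing by $\pm k^2$ per self-crossing can never force more than three self-crossings on $L_i\cup L_j$. Even if each non-edge forced extra self-crossings at both endpoints, that adds only $O(n)$. It still falls short, e.g.\ for $n=9$ with $\overline G=K_{4,5}$ one gets $9+9+32=50<51$.

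The missing idea is a genuinely $4$-component obstruction, which is what the paper uses.
\begin{itemize}
\item It takes the $4$-component link $J$ of \cite[Proposition 7.4]{BDMOV2025}. This link cannot be made homotopy trivial in fewer than $6$ crossing changes, all of them between different components.
\item Non-repeating Milnor invariants of length at most $4$ are $C_4$-invariants, so a $C_4$-trivial $4$-component link is homotopy trivial. Hence $u_4(J)\ge 6$.
\item Take $L$ so that every $4$-component sublink is $J$. The counting argument in the proof of \cite[Theorem 7.2]{BDMOV2025} then shows that at least $2\ceil{\frac{n(n-2)}{3}}$ inter-component crossing changes are needed just to make every $4$-component sublink homotopy trivial.
\item This constraint on $4$-vertex subsets pushes the edge density of $G$ to about $2/3$, rather than the $1/2$ permitted by triangle-freeness of $\overline G$.
\item Requiring each component to be non-$C_4$-trivial adds the $n$ self-crossings, exactly as in your step (1).
\end{itemize}
Your Sato--Levine and triple-linking machinery plays no role in this argument.
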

\begin{proof}
In \cite[Proposition 7.4]{BDMOV2025} there appears a 4-component link $J$ which cannot be reduced to  homotopy trivial link in fewer than $6$ crossing changes and all of those crossing changes must be between different components.  As a $C_4$-trivial $4$-component link is homotopy trivial, $u_4(J)\ge 6$.  

In the proof of \cite[Theorem 7.2]{BDMOV2025} it is shown that if $L$ is a link whose every 4-component sublink is $J$, then any sequence of crossing changes transforming $L$ to a link whose every 4-component sublink is homotopy trivial requires at least $2\ceil{\frac{n(n-2)}{3}}$ crossing changes between different components.    If we also require each component of $L$ to not be $C_4$-trivial, then any sequence of crossing changes resulting in a $C_4$-trivial link will need $n$ additional self-crossing changes, completing the proof. 
\end{proof}

\bibliographystyle{plain}
\bibliography{biblio}

\end{document}